\let\oldmarginpar\marginpar
\renewcommand\marginpar[1]{\-\oldmarginpar[\raggedleft\footnotesize #1]%
{\raggedright\footnotesize #1}}
\newcommand{\R}{\mathbb{R}}
\newcommand{\Rn}{{\mathbb{R}^d}}
\newcommand{\Rk}{{\mathbb{R}^2}}
\newcommand{\dimH}{\dim_{\mathcal{H}}}
\newcommand{\dimM}{\dim_{\mathcal{M}}}
           \def\XXint#1#2#3{{\setbox0=\hbox{$#1{#2#3}{\int}$}
                \vcenter{\hbox{$#2#3$}}\kern-.5\wd0}}
\def\le{\leqslant}
\def\ge{\geqslant}
\def\phi{\varphi}
\def\rho{\varrho}
\def\dist{\qopname\relax o{dist}}
\theoremstyle{plain}
\newtheorem{thm}[equation]{Theorem}
\newtheorem{lem}[equation]{Lemma}
\newtheorem{prop}[equation]{Proposition}
\theoremstyle{definition}
\newtheorem{defn}[equation]{Definition}
\theoremstyle{remark}
\numberwithin{equation}{section}
\def\be{\begin{equation}}
\def\ee{\end{equation}}
\title{Examples of fractals satisfying the quasihyperbolic boundary condition}
\author{Petteri Harjulehto and Riku Kl\'en }
\date{\today}
\begin{document}

\subjclass[2010]{28A80 (30F45)}
%
\begin{abstract}
In this paper we give explicit examples of bounded domains that satisfy the quasihyperbolic  boundary condition and calculate the values for the constants. These domains are also John domains and we calculate John constants as well. The authors do not know any other paper where exact values of parameters has been estimated.
\end{abstract}

\maketitle

\section{Introduction}

Domains that satisfy the quasihyperbolic boundary condition with a constant $\beta \in (0,1]$ (see  Definition~\ref{def:QHBC}) were introduced Gehring and Martio in \cite{GehM85} and after that they have been studied intensively. The constant $\beta$  plays a crucial role in these studies and many properties have been proved in the terms of it. For example in  \cite{KosR97} Koskela and Rohde showed that the Minkowski dimension of the boundary of the domain is at most $d-c\beta^{d-1}$, where $d$ is the dimension of the boundary of the domain and the constant $c$ depends only on the dimension $d$.  Another example is the paper \cite{HurMV} by Hurri-Syrjänen, Marola and Vähäkangas, where the Poincar\'e inequality is stated in terms of $\beta$.  However, there seems to be very few examples where the exact value for $\beta$ is known. In fact the authors do not know any nontrivial example with  exact constants.

John domains form a proper subclass of domains that satisfy the quasihyperbolic boundary condition \cite[Lemma~3.11]{GehM85}. They were originally introduced in \cite{Joh61} but the more intensive studies started from the article \cite{MarS79} by Martio and Sarvas. John domains are recognized as a wide class of irregular domains where the classical results are known to hold, see for example the article \cite{BucK95} by Buckley and Koskela. Thus it is surprising that the value of the parameter is known only for trivial examples; all proofs seems to give only existence of the parameters. The aim of this paper is to give explicit examples of these domains.

\begin{figure}[ht]
  \includegraphics[width=.35\textwidth]{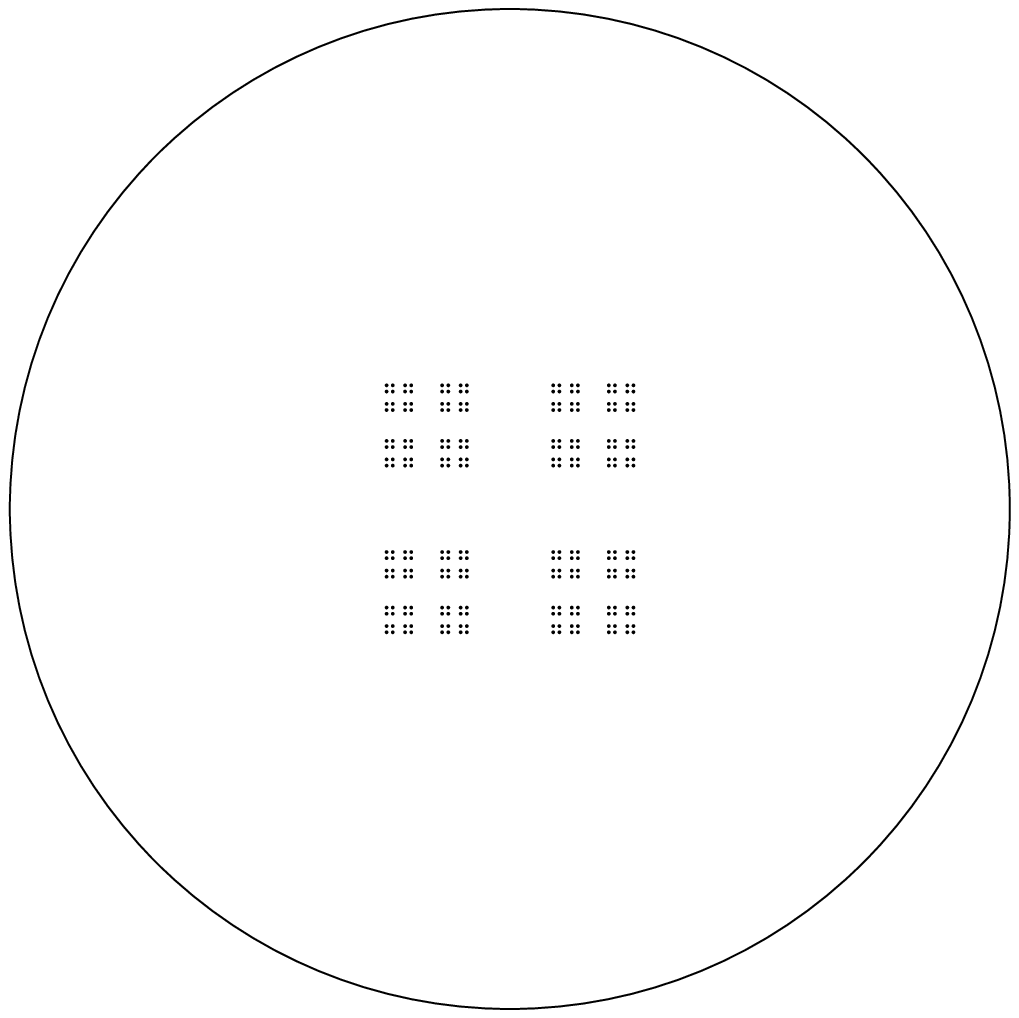}\hspace{1cm}
  \includegraphics[width=.35\textwidth]{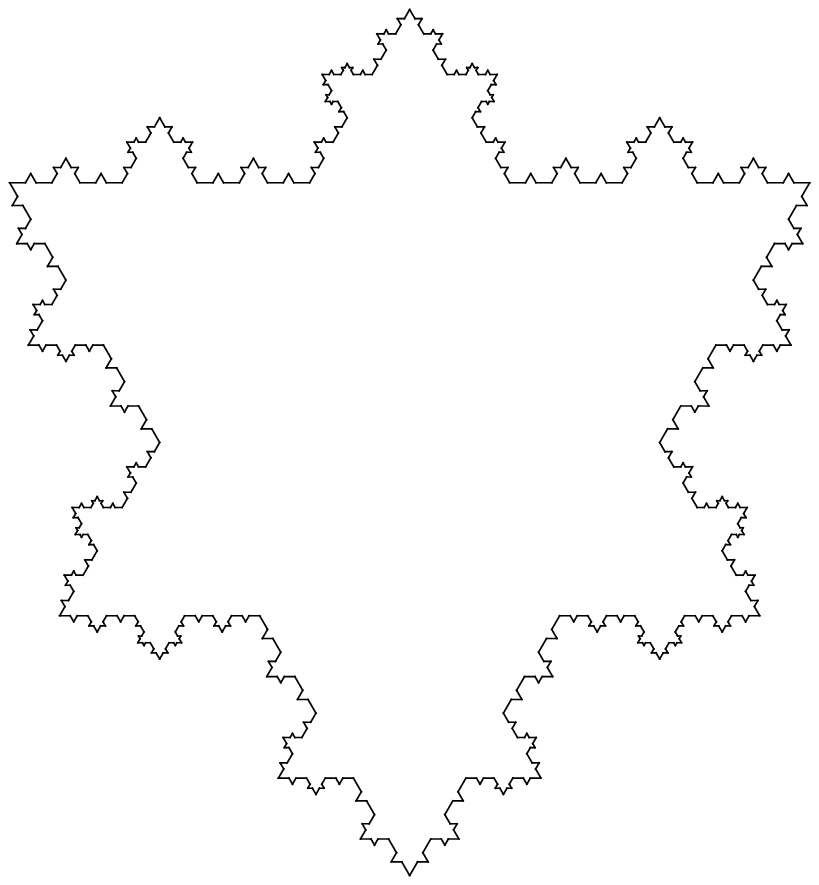}
  \caption{\label{alueet} Left: a Cantor dust-type domain with $\alpha = 1/3$.\newline Right: von Koch snowflake domain with $a = 1/4$.}
\end{figure}

We remove a Cantor dust-type fractal with a ratio $\alpha \in (0,1)$ from an open ball $B(0,2) \subset \Rk$, see Figure~\ref{alueet}.  Then we calculate two constants $\beta_1$ and $\beta_2$ depending only on $\alpha$ and show that our domain  satisfies the quasihyperbolic boundary condition for $\beta\le \beta_1$ and it does not satisfies the quasihyperbolic boundary condition for $\beta\ge \beta_2$ (Theorem~\ref{thm:QHBC}). Although $\beta_1 < \beta_2$, we see that $\beta_2-\beta_1 < 0.04$. Similarly we analyze when this domain is a John domain and show that it is $4.37/\alpha$-John (Theorem~\ref{thm:John}).

We construct a von Koch snowflake in the plane by replacing the middle $a$-th portion, $a\in (0,\tfrac12]$, of each line segment by  the other two sides of an equilateral triangle, see Figure~\ref{alueet}.
We show that the von Koch snowflake domain  satisfies the quasihyperbolic boundary condition  for $\beta\le \beta'_1$ but not for $\beta\ge \beta'_2$ (Theorem~\ref{koch:QHBC}), here $\beta'_1$ and $\beta'_2$ depend only on $a$.  Finally we show that the von Koch snowflake domain is a John domain with a constant $\max\Big\{2, \tfrac{4}{3(1-a)}\Big\}$  (Theorem~\ref{koch:John}). So in particularly for $a\in (0, \tfrac13]$ it is $2$-John.  In this range the result is sharp and  surprisingly the constant does not depend on the parameter $a$ since the worst case is the equilateral triangle inside  the von Koch snowflake domain and every equilateral triangle is a John domain with a constant $2$.

\section{Preliminary results}

Let $D\subsetneq\Rn$ be a domain. The quasihyperbolic length of a rectifiable curve $\gamma \subset D$ is
\[
  \ell_k(\gamma) = \int_\gamma \frac{|dz|}{\dist(z,\partial D)},
\]
where $\dist (z,\partial D)$ is the Euclidean distance between $z$ and $\partial D$.
The quasihyperbolic distance $k_D$ is defined by
\[
  k_{D}(x,y)=\inf_{\gamma} \ell_k (\gamma), \quad x,y\in D,
\]
where the infimum is taken over all rectifiable curves in $D$ joining $x$ and $y$.
By the definition it is clear that the quasihyperbolic metric is monotone with respect to domains, which means that if $D \subsetneq \Rn$ and $D' \subset D$ are domains, and $x,y \in D'$, then $k_D(x,y) \le k_{D'}(x,y)$.

We recall next the definitions of the quasihyperbolic boundary condition and the class of John domains.

\begin{defn}\cite{GehM85}\label{def:QHBC}
A domain $D \subsetneq \Rn$ satisfies a \emph{quasihyperbolic boundary condition} with constants $\beta\in(0,1]$ and $c>0$, or shortly $D$ satisfies $\beta$-QHBC, if there exists a distinguished point  $x_0 \in D$ such that
\begin{equation}\label{QHBC}
  k_D(x_0,x) \le \frac{1}{\beta}\log\frac{1}{\dist(x,\partial D)}+c
\end{equation}
for all $x \in D$.
\end{defn}

Note that if $D' \subset D$ and $x, y \in D'$, then  $k_D(x, y) \le k_{D'}(x,y)$. We use this property when we obtain lower estimates for the quasihyperbolic distance.

\begin{defn}\cite {MarS79}
A domain $D$ is a \emph{$c$-John domain}, $c \ge 1$, if there is a distinguished point $x_0 \in D$ such that any $x \in D$ can be connected to $x_0$ by a rectifiable curve $\gamma:[0, l] \to D$, which is parametrized by arclength and with $\gamma(0)=x$, $\gamma(l)=x_0$ and
\[
\dist(\gamma(t), \partial D) \ge \frac1c t
\]   
for every $0\le t\le l$. The distinguished point $x_0$ is called the John center.
\end{defn}

Punctured space $\Rn \setminus \{ 0 \}$ is one of the very few domains where the explicit formula for the quasihyperbolic distance is known. Martin and Osgood proved the following result in 1986 \cite[p. 38]{MarOsg86}.

\begin{prop}\label{MOprop}
  Let $G = \Rn \setminus \{ 0 \}$ and $x,y \in G$. Then
  \[
    k_G(x,y) = \sqrt{\theta^2+\log^2 \frac{|x|}{|y|}},
  \]
where $\theta = \measuredangle (x,0,y)$.
\end{prop}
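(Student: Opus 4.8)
The plan is to pass to polar coordinates and recognise the quasihyperbolic metric on $G=\Rn\setminus\{0\}$ as a product (cylindrical) Riemannian metric, for which geodesic distances obey a Pythagorean law. Since $\partial G=\{0\}$ we have $\dist(z,\partial G)=|z|$, so $\ell_k(\gamma)=\int_\gamma |dz|/|z|$. Writing $z=r\omega$ with $r=|z|>0$ and $\omega=z/|z|\in S^{d-1}$, the Euclidean line element splits as $|dz|^2=dr^2+r^2|d\omega|^2$, where $|d\omega|$ denotes the arclength element of the unit sphere, and therefore
\[
  \frac{|dz|^2}{|z|^2}=\frac{dr^2}{r^2}+|d\omega|^2=dt^2+|d\omega|^2,\qquad t:=\log r=\log|z|.
\]
Thus the quasihyperbolic metric is isometric to the Riemannian product of the line $\R$ (coordinate $t$) with the round sphere $S^{d-1}$; the substitution automatically excludes the origin, so no further constraint on competitor curves is needed.

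First I would establish the lower bound. For a rectifiable curve $\gamma$ in $G$ from $x$ to $y$, parametrised by arclength, write its image in cylinder coordinates as $s\mapsto(t(s),\omega(s))$, and set $L_1=\int|\dot t|\,ds$, $L_2=\int|\dot\omega|\,ds$ for the lengths of the two projections. Minkowski's integral inequality, applied to the $\R^2$-valued function $s\mapsto(|\dot t|,|\dot\omega|)$, gives
\[
  \ell_k(\gamma)=\int\sqrt{\dot t^2+|\dot\omega|^2}\,ds\ \ge\ \sqrt{L_1^2+L_2^2}.
\]
Here $L_1\ge\bigl|\log\tfrac{|x|}{|y|}\bigr|$ since the $t$-projection joins $\log|x|$ to $\log|y|$ in $\R$, and $L_2\ge\theta$ since the $\omega$-projection joins $x/|x|$ to $y/|y|$ on $S^{d-1}$, whose intrinsic distance is exactly the subtended angle $\theta=\measuredangle(x,0,y)\in[0,\pi]$. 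Combining gives $\ell_k(\gamma)\ge\sqrt{\theta^2+\log^2\frac{|x|}{|y|}}$, hence the same lower bound for $k_G(x,y)$.

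For the matching upper bound I would exhibit an explicit competitor. Take a minimising great-circle arc $\omega(\cdot)$ on $S^{d-1}$ from $x/|x|$ to $y/|y|$ (of length $\theta$) together with the affine path $t(\cdot)$ from $\log|x|$ to $\log|y|$, each reparametrised proportionally to arclength on $[0,1]$ so that $|\dot t|\equiv\bigl|\log\tfrac{|x|}{|y|}\bigr|$ and $|\dot\omega|\equiv\theta$ are constant. The resulting curve $s\mapsto(t(s),\omega(s))$ lies in $G$ and has quasihyperbolic length $\int_0^1\sqrt{\theta^2+\log^2\frac{|x|}{|y|}}\,ds=\sqrt{\theta^2+\log^2\frac{|x|}{|y|}}$, so the infimum is attained and equals the asserted value.

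The only genuinely delicate point is the lower bound: the two scalar estimates $L_1\ge\bigl|\log\tfrac{|x|}{|y|}\bigr|$ and $L_2\ge\theta$ must be fused through the Minkowski step, which is precisely the Pythagorean feature of Riemannian products. The inequality $L_2\ge\theta$ itself rests on the classical fact that geodesic distance on the round sphere equals the subtended angle; the remaining ingredients are a routine change of variables and the explicit competitor.
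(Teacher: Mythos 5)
Your proof is correct, but note that the paper itself does not prove this proposition at all: it is quoted from Martin and Osgood \cite[p.~38]{MarOsg86} and used as a black box (its only application is the lower bound in Theorem~\ref{koch:QHBC}, via monotonicity of the quasihyperbolic metric with respect to domains). Your argument is essentially the standard proof of that quoted result, and in substance the original one: the substitution $t=\log|z|$ turns the quasihyperbolic metric of $\Rn\setminus\{0\}$ into the product Riemannian metric of the cylinder $\R\times S^{d-1}$; the lower bound follows from $\bigl|\int F\bigr|\le\int|F|$ applied to the $\R^2$-valued function $F=(|\dot t|,|\dot\omega|)$, together with the fact that intrinsic distance on the unit sphere equals the subtended angle; and the upper bound comes from the explicit constant-speed curve $s\mapsto e^{t(s)}\omega(s)$, whose endpoints are $x$ and $y$ and whose quasihyperbolic speed is identically $\sqrt{\theta^2+\log^2(|x|/|y|)}$. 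Two points you pass over are routine but worth naming: a rectifiable curve parametrized by arclength is Lipschitz, hence differentiable a.e., which is what justifies the decomposition $|\dot\gamma|^2=\dot r^2+r^2|\dot\omega|^2$ almost everywhere for an arbitrary competitor curve; and one should take $d\ge 2$ so that $G$ is connected (the paper only needs $d=2$). What your write-up buys over the paper is self-containedness, replacing an external citation by a complete elementary argument; what it costs is length, which is presumably why the authors chose to cite.
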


Finally, we give a formula for the quasihyperbolic length of a Euclidean line segment in twice-punctured space.

\begin{lem}{\cite[Remark 4.26]{Kle09}}\label{QHjana}
  Let $G = \Rn \setminus \{ a,b \}$ for $a \neq b$, $c = (a+b)/2$, the line $l$ be the perpendicular bisector of $[a,b]$ and $x \in l$. Then
  \[
    \ell_k([x,c]) = \log \left(2 \left( \left| x-c \right| + \sqrt{  |a-b|^2/4  +  \left| x-c \right| ^2 } \right) \right) - \log |a-b|.
  \]
\end{lem}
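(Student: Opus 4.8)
The plan is to reduce the computation of $\ell_k([x,c])$ to a single elementary one-dimensional integral by exploiting the geometry of the perpendicular bisector, and then to evaluate that integral explicitly. The whole difficulty is conceptual rather than computational: once the distance-to-boundary function along the segment is identified, everything is standard.

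First I would record the shape of the distance function along $[x,c]$. Since $G = \Rn \setminus \{a,b\}$, the boundary is $\partial G = \{a,b\}$, so $\dist(z,\partial G) = \min\{|z-a|,|z-b|\}$ for every $z \in G$. The key observation is that both endpoints $c$ and $x$ lie on $l$, hence the entire segment $[x,c]$ lies on $l$; and every point $z \in l$ is equidistant from $a$ and $b$, so $\dist(z,\partial G) = |z-a| = |z-b|$ there. Writing $r = |a-b|/2 = |a-c|$ and using that $l$ meets the line through $a,b$ orthogonally at $c$, the Pythagorean theorem gives, for the point $z \in [x,c]$ at distance $t$ from $c$,
\[
  \dist(z,\partial G) = \sqrt{r^2 + t^2} = \sqrt{|a-b|^2/4 + t^2}.
\]

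Next I would parametrize $[x,c]$ by arclength $t \in [0,|x-c|]$ starting at $c$, which turns the line integral defining $\ell_k$ into
\[
  \ell_k([x,c]) = \int_0^{|x-c|} \frac{dt}{\sqrt{|a-b|^2/4 + t^2}}.
\]
This is a standard integral, with antiderivative $\log\bigl(t + \sqrt{|a-b|^2/4 + t^2}\bigr)$ (equivalently $\operatorname{arsinh}(2t/|a-b|)$). Evaluating between $0$ and $|x-c|$ yields
\[
  \log\Bigl(|x-c| + \sqrt{|a-b|^2/4 + |x-c|^2}\Bigr) - \log\frac{|a-b|}{2}.
\]

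Finally I would match this to the stated form. Using $\log\frac{|a-b|}{2} = \log|a-b| - \log 2$, the term $-\log 2$ combines with the first logarithm to produce the factor $2$ inside, giving exactly
\[
  \log\Bigl(2\bigl(|x-c| + \sqrt{|a-b|^2/4 + |x-c|^2}\bigr)\Bigr) - \log|a-b|,
\]
as claimed. I do not expect a genuine obstacle here; the only steps requiring care are the geometric reduction to the equidistant case (so that $\dist(\cdot,\partial G)$ depends only on $t$ and not on which of $a,b$ is closer) and the bookkeeping of the $\log 2$ when rewriting the lower-limit contribution. The integral itself is elementary.
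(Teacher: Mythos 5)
Your proof is correct: the reduction to $\dist(z,\partial G)=\sqrt{|a-b|^2/4+t^2}$ via equidistance on the perpendicular bisector and Pythagoras, followed by the $\operatorname{arsinh}$ integral and the $\log 2$ bookkeeping, all check out. Note that the paper itself gives no proof of this lemma, only a citation to \cite[Remark 4.26]{Kle09}; your computation is the standard argument behind that cited remark, so it serves as a correct self-contained verification by essentially the same (and really the only natural) route.
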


\section{Cantor dust-type fractal}

Let $\alpha \in (0,1)$.
Let $Q_0\subset \Rk$ be the closed square in the plane which side length is 1 and which is centered at the origin. We make a Cantor construction in $Q_0$. We remove from $Q_0$ strips $\{-\tfrac{\alpha}2 < x < \tfrac{\alpha}2\}$ and $\{-\tfrac{\alpha}2 < y < \tfrac{\alpha}2\}$. We get four closed squares $Q_1^j$, $j=1, \ldots, 2^{2}$. We continue the process by removing from each $Q_1^j$ vertical and horizontal strips of width $\alpha \time \ell(Q_i^j)$. We set
\[
C_\alpha = \bigcap_{i=1}^\infty \bigcup_{j=1}^{2^{2i}} Q_i^j \cap Q_0.
\]
Thus $C_\alpha$ consists of the corner points of all squares $Q_i^j$.
The set $C_\alpha$ is self-similar and thus its Hausdorff dimension is equal to its Minkowski dimension \cite[Lemma 3.1, p. 488]{Lap91}. By \cite[Theorem~9.3, p.~118]{Fal90} we can calculate
\[
\dimH (C_\alpha) = \dimM (C_\alpha) = \frac{\log4}{\log\tfrac{2}{1-\alpha}}.
\]
Thus $\alpha \mapsto  \dimH (C_\alpha) = \dimM (C_\alpha)$ is a strictly decreasing bijective mapping from $(0,1)$  to $(0,2)$. Note that in the range $\alpha \in (0, \tfrac12]$ we have $\dimH (C_\alpha) = \dimM (C_\alpha) <1$.  

We set
\[
\Omega_\alpha = B(0, 2) \setminus C_\alpha \subset \Rk.
\]
Then $\Omega_\alpha$ is a bounded domain with $\dimH (\partial \Omega_\alpha) = \dimM (\partial\Omega_\alpha) = \max\big\{1, \dimM (C_\alpha)\big\}$ and for every $\lambda\in [1,2)$ there exists a unique $\alpha \in [0,\tfrac12]$ such that $\lambda = \dimH (\partial \Omega_\alpha) = \dimM (\partial\Omega_\alpha)$. 
For the domain $\Omega_\alpha $ see Figure \ref{alueet} or \ref{pic:geodeesi1}.

\begin{thm}\label{thm:QHBC}
The domain $\Omega_\alpha \subset \Rk$ (defined above) satisfies the $\beta$-QHBC for
\begin{equation}\label{equ:upperbound}
  \beta \le \beta_1 = \frac{\log \frac{2}{1-\alpha}}{\log{\frac{2 + \sqrt{4 + (1-\alpha)^2}}{1-\alpha}}  + \frac3{2\alpha} + \frac{\pi}2- \frac32}
\end{equation}
and it does not satisfy $\beta$-QHBC for
\begin{equation}\label{equ:lowerbound}
  \beta \ge \beta_2 = \frac{\log \frac{2}{1-\alpha}}{\log{\frac{2 + \sqrt{4 + (1-\alpha)^2}}{1-\alpha}}  + \frac{1-\alpha}{\alpha} +  \frac{\pi}{2}}.
\end{equation}
\end{thm}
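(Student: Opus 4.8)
The plan is to exploit two structural features: the scale invariance of the quasihyperbolic metric (both $\ell_k$ and $k_D$ are unchanged under similarities) and the self-similarity of $C_\alpha$, so that the whole estimate reduces to a single ``unit cell'' repeated across the levels of the Cantor construction. Throughout I take the distinguished point $x_0=0$, the common center of $B(0,2)$ and of $Q_0$, and I note that $\partial\Omega_\alpha=\partial B(0,2)\cup C_\alpha$. Since $\dist(x,\partial\Omega_\alpha)$ stays bounded away from $0$ unless $x$ approaches $C_\alpha$, the inequality \eqref{QHBC} is automatic away from the fractal and the whole game is to control $k_{\Omega_\alpha}(0,x)$ as $x\to C_\alpha$. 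By the dihedral symmetry of $C_\alpha$ and self-similarity it suffices to treat $x$ approaching one fixed corner point $z=\bigcap_i Q_i^{j_i}$ along a nested chain of squares, where the $i$-th square has side $\ell_i=(\tfrac{1-\alpha}2)^i$; thus approaching $z$ to within $\dist(x,\partial\Omega_\alpha)\approx\ell_n$ means descending $n$ levels and $\log\frac1{\dist(x,\partial\Omega_\alpha)}=n\log\frac2{1-\alpha}+O(1)$.

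For the upper bound \eqref{equ:upperbound} I would connect $0$ to $x$ by an explicit template curve that descends the chain level by level, at each level routing from a reference point of $Q_i^{j_i}$ to the corresponding reference point of $Q_{i+1}^{j_{i+1}}$ along a scaled copy of one unit-cell path. By scale invariance every level contributes the same quasihyperbolic length $L$, so $\ell_k(\gamma)=nL+O(1)\le\frac{L}{\log\frac2{1-\alpha}}\log\frac1{\dist(x,\partial\Omega_\alpha)}+c$, which is exactly the asserted bound once one checks that $L$ equals the denominator of $\beta_1$. I would compute $L$ by splitting the unit-cell path into three pieces and evaluating each with the tools already in hand: a straight segment passing between two adjacent corners of a child square (two punctures at mutual distance $1-\alpha$ in the normalization making the parent square have side $2$), to which Lemma~\ref{QHjana} applies and yields exactly $\log\frac{2+\sqrt{4+(1-\alpha)^2}}{1-\alpha}$; a quarter-turn of angle $\tfrac\pi2$ at constant distance around a corner, which by Proposition~\ref{MOprop} contributes exactly $\tfrac\pi2$; and a straight run along a gap of width $\propto\alpha$, whose elementary integral of $1/\dist$ gives $\frac3{2\alpha}-\frac32$. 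The one delicate point on this side is to verify that along the whole template curve the distance to the full set $C_\alpha$ is comparable to the distance to the few corner points used in the local estimate, so that invoking the punctured-plane formulas underestimates $\dist(\cdot,\partial\Omega_\alpha)$ only by a bounded factor that is absorbed into $c$.

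For the lower bound \eqref{equ:lowerbound} I would fix the points $x_n$ to be the reference points of the chain squares $Q_n^{j_n}$ and bound $k_{\Omega_\alpha}(0,x_n)$ from below by a sum of per-level contributions. The mechanism is monotonicity of $k$ in the domain: to pass from level $i$ to level $i+1$ any competing curve must cross an annular shell in which, after \emph{enlarging} $\Omega_\alpha$ to a plane punctured at only the two relevant corner points, Lemma~\ref{QHjana} and Proposition~\ref{MOprop} force a quasihyperbolic length at least $L'=\log\frac{2+\sqrt{4+(1-\alpha)^2}}{1-\alpha}+\frac{1-\alpha}\alpha+\frac\pi2$. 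Here the log-term and the $\tfrac\pi2$ turn are the same optimal quantities as in the upper bound, while the gap run is now measured by a genuine geodesic rather than a straight template; this is precisely why $L'<L$ and hence $\beta_2>\beta_1$. Summing over the disjoint shells gives $k_{\Omega_\alpha}(0,x_n)\ge nL'-O(1)$, so $\frac{k_{\Omega_\alpha}(0,x_n)}{\log(1/\dist(x_n,\partial\Omega_\alpha))}\to\frac{L'}{\log\frac2{1-\alpha}}=\frac1{\beta_2}$, which makes \eqref{QHBC} impossible for any fixed $c$ once $\beta\ge\beta_2$.

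The main obstacle I anticipate is the per-level lower bound: one must show that \emph{every} curve, not merely the template, accumulates at least $L'$ per level, even though a competitor may wander, circle several corners, or try to shortcut through a neighbouring gap. Making the shells genuinely disjoint so that the per-level estimates add, and choosing at each scale the correct pair of punctures to retain when enlarging the domain, is the crux of the argument. By contrast the upper-bound side is comparatively routine once the comparability of $\dist(\cdot,C_\alpha)$ with the local corner distances is established, since there one only needs a single admissible curve.
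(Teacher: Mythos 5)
Your proposal follows essentially the same route as the paper: the same distinguished points (centers of the nested squares), the same three-piece per-level template for the upper bound (a segment evaluated by Lemma~\ref{QHjana}, a quarter-turn arc contributing $\tfrac\pi2$, a straight run along the gap contributing $\tfrac3{2\alpha}-\tfrac32$), the same per-level crossing estimates via domain monotonicity and the punctured-plane formulas for the lower bound, and the same limit $n\to\infty$ against $\log\frac{1}{\dist(x_n,\partial\Omega_\alpha)}=n\log\frac{2}{1-\alpha}+O(1)$. Your per-level totals $L$ and $L'$ are exactly the denominators of $\beta_1$ and $\beta_2$.

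There is, however, a genuine flaw in how you propose to close the upper bound. You plan to verify only that $\dist(\cdot,\partial\Omega_\alpha)$ is \emph{comparable} to the distance to the chosen punctures along the template, ``a bounded factor absorbed into $c$''. That cannot work for this theorem: if the two distances agree only up to a multiplicative factor $C>1$, then each of the $n$ per-level contributions inflates from $L$ to $CL$, so it is the coefficient of $n$ --- not the additive constant in \eqref{QHBC} --- that changes, and you would only obtain the QHBC for $\beta\le\beta_1/C$. Since the entire content of \eqref{equ:upperbound} is the exact constant, you need exact equality: the template must be placed so that on each piece the nearest point of $\partial\Omega_\alpha$ is precisely one of the punctures of Lemma~\ref{QHjana} (respectively the corner at the center of the arc). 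This is what the paper's path achieves, and it forces a different pair of punctures from the one you name. For the segment along the perpendicular bisector entering a square $Q_l^j$, the nearest points of $C_\alpha$ are the two corners flanking the interior gap of $Q_l^j$, at mutual distance $\alpha\big(\tfrac{1-\alpha}{2}\big)^l$, not the two adjacent corners of $Q_l^j$ at distance $\big(\tfrac{1-\alpha}{2}\big)^l$ that you specify; with your punctures the distance to $\{a,b\}$ strictly overestimates $\dist(\cdot,\partial\Omega_\alpha)$, so the resulting integral is not an upper bound for the true quasihyperbolic length of that segment. It is only the coincidence that both configurations have the same ratio $|x-c|/|a-b|=\tfrac{1}{1-\alpha}$ in the notation of Lemma~\ref{QHjana} which makes your value $\log\frac{2+\sqrt{4+(1-\alpha)^2}}{1-\alpha}$ numerically agree with the correct one. (A smaller slip: your claim that $\dist(x,\partial\Omega_\alpha)$ stays bounded away from $0$ away from $C_\alpha$ is false for $x$ near $\partial B(0,2)$; that region is indeed harmless, but because the outer boundary is smooth and satisfies the $1$-QHBC, not because the distance stays large.)
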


\begin{figure}[ht]
  \includegraphics[width=.45\textwidth]{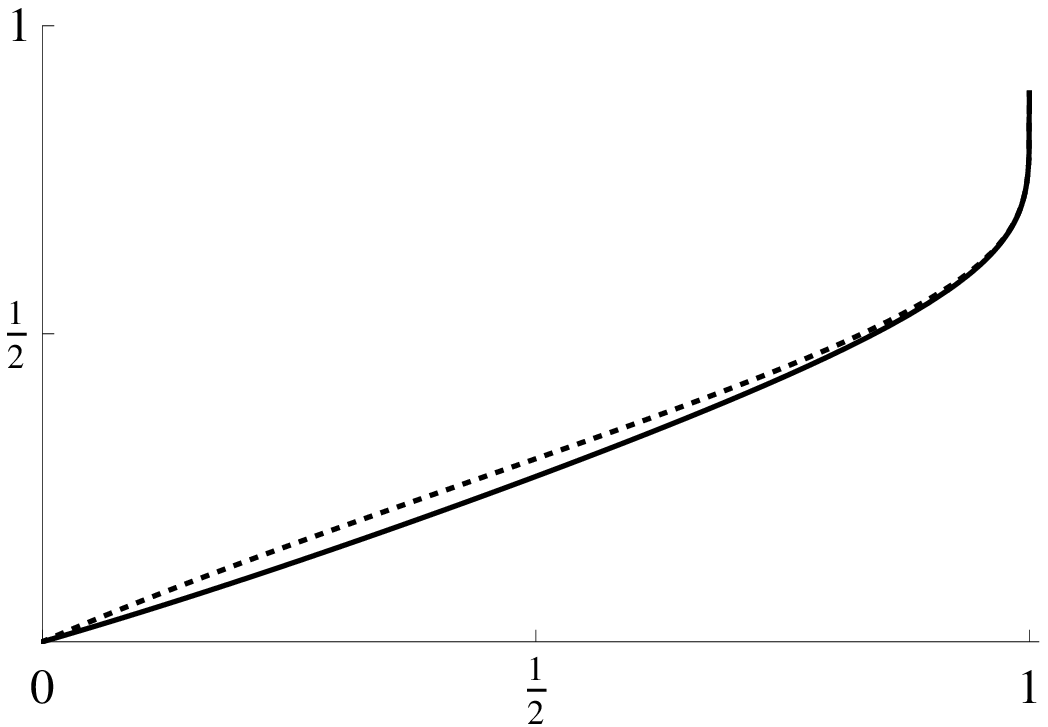}\hspace{1cm}
  \includegraphics[width=.45\textwidth]{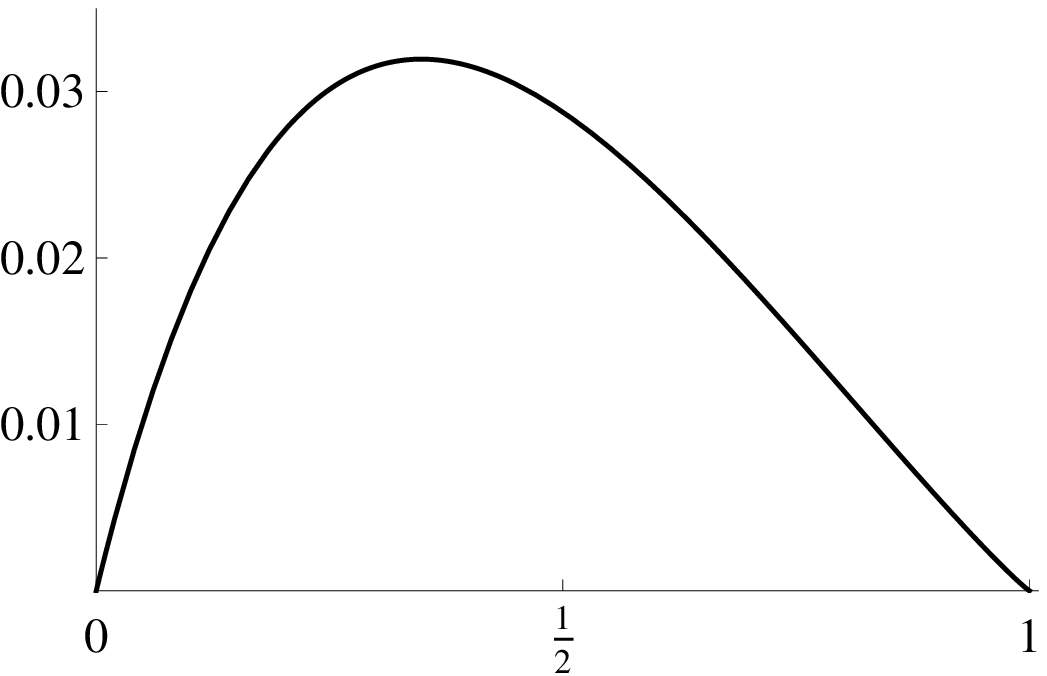}
  \caption{\label{CDbetas} Left: bounds $\beta_1$ (solid line) and $\beta_2$ (dashed line) of Theorem \ref{thm:QHBC} plotted as functions of $\alpha$. Right:$\beta_2 - \beta_1$ plotted as a function of $\alpha$.}
\end{figure}

Note that although $\beta_1 < \beta_2$ we have $\beta_2-\beta_1 < 0.04$, see Figure \ref{CDbetas}.  

\begin{proof}
Let $x_0$ be a center of $Q_0$ and let $x_n$ be a center of $Q_n^j$ in an upper right corner, see Figure~\ref{pic:geodeesi1}. We want to give upper and lower estimates for the quasihyperbolic distance $k_{\Omega_\alpha}(x_0, x_n)$.  Then by the geometry of the domain we can connect by a line segment any $x\in Q_0\cap \Omega_\alpha$ to  a suitable center point. Thus if the center points satisfy the $\beta$-QHBC then, by increasing the constant $c$ in \eqref{QHBC}, all $x\in Q_0\cap \Omega_\alpha$ satisfy $\beta$-QHBC for the same $\beta$.    
We start with the upper estimate. We connect $x_0$ and $x_n$ as in  the Figure \ref{pic:geodeesi1}, where we use line segments and circle arcs near the points $x_1$,\dots,$x_{n-1}$. Let us denote $l \in \{ 1,2,\dots ,n \}$. We first estimate the dotted part of the path denoted by $p_l$.
Let $y_l$ and $u_l$ be as in Figure~\ref{pic:geodeesi1}.
By Lemma \ref{QHjana} we obtain
\[
\begin{split}
  k(p_l) & = k(y_l,u_l) \le k([y_l,u_l])\\
  & = \log\Big(\alpha \big(\tfrac{1-\alpha}{2}\big)^{l-1} \Big( 1 + \sqrt{1+ \tfrac14 (1-\alpha)^2} \Big)\Big)- \log\big(\alpha \big(\tfrac{1-\alpha}{2}\big)^{l}\big)\\
 &= \log{\frac{2 + \sqrt{4 + (1-\alpha)^2}}{1-\alpha}}.
\end{split}
\]

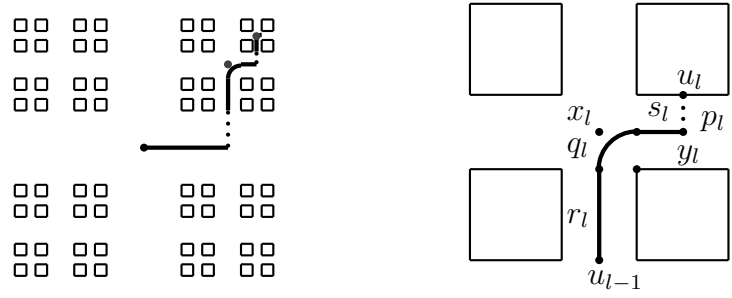
\begin{figure}[ht]
\psset{xunit=17mm,yunit=17mm,algebraic=true,dotstyle=o,dotsize=3pt 0,linewidth=0.8pt,arrowsize=3pt 2,arrowinset=0.25}
\begin{pspicture*}(-1.63,-1.2)(2.19,1.2)
\psline(0.91,1)(1,1)
\psline(1,1)(1,0.91)
\psline(1,0.91)(0.91,0.91)
\psline(0.91,0.91)(0.91,1)
\psline(0.91,0.84)(1,0.84)
\psline(1,0.84)(1,0.75)
\psline(1,0.75)(0.91,0.75)
\psline(0.91,0.75)(0.91,0.84)
\psline(0.91,0.54)(1,0.54)
\psline(1,0.54)(1,0.45)
\psline(1,0.45)(0.91,0.45)
\psline(0.91,0.45)(0.91,0.54)
\psline(0.91,0.38)(1,0.38)
\psline(1,0.38)(1,0.29)
\psline(1,0.29)(0.91,0.29)
\psline(0.91,0.29)(0.91,0.38)
\psline(0.84,0.38)(0.75,0.38)
\psline(0.75,0.38)(0.75,0.29)
\psline(0.75,0.29)(0.84,0.29)
\psline(0.84,0.29)(0.84,0.38)
\psline(0.84,0.45)(0.75,0.45)
\psline(0.75,0.45)(0.75,0.54)
\psline(0.75,0.54)(0.84,0.54)
\psline(0.84,0.54)(0.84,0.45)
\psline(0.84,0.75)(0.75,0.75)
\psline(0.75,0.75)(0.75,0.84)
\psline(0.75,0.84)(0.84,0.84)
\psline(0.84,0.84)(0.84,0.75)
\psline(0.75,0.91)(0.75,1)
\psline(0.75,1)(0.84,1)
\psline(0.84,1)(0.84,0.91)
\psline(0.84,0.91)(0.75,0.91)
\psline(0.29,1)(0.38,1)
\psline(0.38,1)(0.38,0.91)
\psline(0.38,0.91)(0.29,0.91)
\psline(0.29,0.91)(0.29,1)
\psline(0.45,1)(0.54,1)
\psline(0.54,1)(0.54,0.91)
\psline(0.54,0.91)(0.45,0.91)
\psline(0.45,0.91)(0.45,1)
\psline(0.29,0.84)(0.38,0.84)
\psline(0.38,0.84)(0.38,0.75)
\psline(0.38,0.75)(0.29,0.75)
\psline(0.29,0.75)(0.29,0.84)
\psline(0.45,0.84)(0.54,0.84)
\psline(0.54,0.84)(0.54,0.75)
\psline(0.54,0.75)(0.45,0.75)
\psline(0.45,0.75)(0.45,0.84)
\psline(0.29,0.54)(0.38,0.54)
\psline(0.38,0.54)(0.38,0.45)
\psline(0.38,0.45)(0.29,0.45)
\psline(0.29,0.45)(0.29,0.54)
\psline(0.45,0.54)(0.54,0.54)
\psline(0.54,0.54)(0.54,0.45)
\psline(0.54,0.45)(0.45,0.45)
\psline(0.45,0.45)(0.45,0.54)
\psline(0.45,0.38)(0.54,0.38)
\psline(0.54,0.38)(0.54,0.29)
\psline(0.54,0.29)(0.45,0.29)
\psline(0.45,0.29)(0.45,0.38)
\psline(0.29,0.38)(0.38,0.38)
\psline(0.38,0.38)(0.38,0.29)
\psline(0.38,0.29)(0.29,0.29)
\psline(0.29,0.29)(0.29,0.38)
\psline(0.91,-1)(1,-1)
\psline(1,-1)(1,-0.91)
\psline(1,-0.91)(0.91,-0.91)
\psline(0.91,-0.91)(0.91,-1)
\psline(0.91,-0.84)(1,-0.84)
\psline(1,-0.84)(1,-0.75)
\psline(1,-0.75)(0.91,-0.75)
\psline(0.91,-0.75)(0.91,-0.84)
\psline(0.91,-0.54)(1,-0.54)
\psline(1,-0.54)(1,-0.45)
\psline(1,-0.45)(0.91,-0.45)
\psline(0.91,-0.45)(0.91,-0.54)
\psline(0.91,-0.38)(1,-0.38)
\psline(1,-0.38)(1,-0.29)
\psline(1,-0.29)(0.91,-0.29)
\psline(0.91,-0.29)(0.91,-0.38)
\psline(0.84,-0.38)(0.75,-0.38)
\psline(0.75,-0.38)(0.75,-0.29)
\psline(0.75,-0.29)(0.84,-0.29)
\psline(0.84,-0.29)(0.84,-0.38)
\psline(0.84,-0.45)(0.75,-0.45)
\psline(0.75,-0.45)(0.75,-0.54)
\psline(0.75,-0.54)(0.84,-0.54)
\psline(0.84,-0.54)(0.84,-0.45)
\psline(0.84,-0.75)(0.75,-0.75)
\psline(0.75,-0.75)(0.75,-0.84)
\psline(0.75,-0.84)(0.84,-0.84)
\psline(0.84,-0.84)(0.84,-0.75)
\psline(0.75,-0.91)(0.75,-1)
\psline(0.75,-1)(0.84,-1)
\psline(0.84,-1)(0.84,-0.91)
\psline(0.84,-0.91)(0.75,-0.91)
\psline(0.29,-1)(0.38,-1)
\psline(0.38,-1)(0.38,-0.91)
\psline(0.38,-0.91)(0.29,-0.91)
\psline(0.29,-0.91)(0.29,-1)
\psline(0.45,-1)(0.54,-1)
\psline(0.54,-1)(0.54,-0.91)
\psline(0.54,-0.91)(0.45,-0.91)
\psline(0.45,-0.91)(0.45,-1)
\psline(0.29,-0.84)(0.38,-0.84)
\psline(0.38,-0.84)(0.38,-0.75)
\psline(0.38,-0.75)(0.29,-0.75)
\psline(0.29,-0.75)(0.29,-0.84)
\psline(0.45,-0.84)(0.54,-0.84)
\psline(0.54,-0.84)(0.54,-0.75)
\psline(0.54,-0.75)(0.45,-0.75)
\psline(0.45,-0.75)(0.45,-0.84)
\psline(0.29,-0.54)(0.38,-0.54)
\psline(0.38,-0.54)(0.38,-0.45)
\psline(0.38,-0.45)(0.29,-0.45)
\psline(0.29,-0.45)(0.29,-0.54)
\psline(0.45,-0.54)(0.54,-0.54)
\psline(0.54,-0.54)(0.54,-0.45)
\psline(0.54,-0.45)(0.45,-0.45)
\psline(0.45,-0.45)(0.45,-0.54)
\psline(0.45,-0.38)(0.54,-0.38)
\psline(0.54,-0.38)(0.54,-0.29)
\psline(0.54,-0.29)(0.45,-0.29)
\psline(0.45,-0.29)(0.45,-0.38)
\psline(0.29,-0.38)(0.38,-0.38)
\psline(0.38,-0.38)(0.38,-0.29)
\psline(0.38,-0.29)(0.29,-0.29)
\psline(0.29,-0.29)(0.29,-0.38)
\psline(-0.91,1)(-1,1)
\psline(-1,1)(-1,0.91)
\psline(-1,0.91)(-0.91,0.91)
\psline(-0.91,0.91)(-0.91,1)
\psline(-0.91,0.84)(-1,0.84)
\psline(-1,0.84)(-1,0.75)
\psline(-1,0.75)(-0.91,0.75)
\psline(-0.91,0.75)(-0.91,0.84)
\psline(-0.91,0.54)(-1,0.54)
\psline(-1,0.54)(-1,0.45)
\psline(-1,0.45)(-0.91,0.45)
\psline(-0.91,0.45)(-0.91,0.54)
\psline(-0.91,0.38)(-1,0.38)
\psline(-1,0.38)(-1,0.29)
\psline(-1,0.29)(-0.91,0.29)
\psline(-0.91,0.29)(-0.91,0.38)
\psline(-0.84,0.38)(-0.75,0.38)
\psline(-0.75,0.38)(-0.75,0.29)
\psline(-0.75,0.29)(-0.84,0.29)
\psline(-0.84,0.29)(-0.84,0.38)
\psline(-0.84,0.45)(-0.75,0.45)
\psline(-0.75,0.45)(-0.75,0.54)
\psline(-0.75,0.54)(-0.84,0.54)
\psline(-0.84,0.54)(-0.84,0.45)
\psline(-0.84,0.75)(-0.75,0.75)
\psline(-0.75,0.75)(-0.75,0.84)
\psline(-0.75,0.84)(-0.84,0.84)
\psline(-0.84,0.84)(-0.84,0.75)
\psline(-0.75,0.91)(-0.75,1)
\psline(-0.75,1)(-0.84,1)
\psline(-0.84,1)(-0.84,0.91)
\psline(-0.84,0.91)(-0.75,0.91)
\psline(-0.29,1)(-0.38,1)
\psline(-0.38,1)(-0.38,0.91)
\psline(-0.38,0.91)(-0.29,0.91)
\psline(-0.29,0.91)(-0.29,1)
\psline(-0.45,1)(-0.54,1)
\psline(-0.54,1)(-0.54,0.91)
\psline(-0.54,0.91)(-0.45,0.91)
\psline(-0.45,0.91)(-0.45,1)
\psline(-0.29,0.84)(-0.38,0.84)
\psline(-0.38,0.84)(-0.38,0.75)
\psline(-0.38,0.75)(-0.29,0.75)
\psline(-0.29,0.75)(-0.29,0.84)
\psline(-0.45,0.84)(-0.54,0.84)
\psline(-0.54,0.84)(-0.54,0.75)
\psline(-0.54,0.75)(-0.45,0.75)
\psline(-0.45,0.75)(-0.45,0.84)
\psline(-0.29,0.54)(-0.38,0.54)
\psline(-0.38,0.54)(-0.38,0.45)
\psline(-0.38,0.45)(-0.29,0.45)
\psline(-0.29,0.45)(-0.29,0.54)
\psline(-0.45,0.54)(-0.54,0.54)
\psline(-0.54,0.54)(-0.54,0.45)
\psline(-0.54,0.45)(-0.45,0.45)
\psline(-0.45,0.45)(-0.45,0.54)
\psline(-0.45,0.38)(-0.54,0.38)
\psline(-0.54,0.38)(-0.54,0.29)
\psline(-0.54,0.29)(-0.45,0.29)
\psline(-0.45,0.29)(-0.45,0.38)
\psline(-0.29,0.38)(-0.38,0.38)
\psline(-0.38,0.38)(-0.38,0.29)
\psline(-0.38,0.29)(-0.29,0.29)
\psline(-0.29,0.29)(-0.29,0.38)
\psline(-0.91,-1)(-1,-1)
\psline(-1,-1)(-1,-0.91)
\psline(-1,-0.91)(-0.91,-0.91)
\psline(-0.91,-0.91)(-0.91,-1)
\psline(-0.91,-0.84)(-1,-0.84)
\psline(-1,-0.84)(-1,-0.75)
\psline(-1,-0.75)(-0.91,-0.75)
\psline(-0.91,-0.75)(-0.91,-0.84)
\psline(-0.91,-0.54)(-1,-0.54)
\psline(-1,-0.54)(-1,-0.45)
\psline(-1,-0.45)(-0.91,-0.45)
\psline(-0.91,-0.45)(-0.91,-0.54)
\psline(-0.91,-0.38)(-1,-0.38)
\psline(-1,-0.38)(-1,-0.29)
\psline(-1,-0.29)(-0.91,-0.29)
\psline(-0.91,-0.29)(-0.91,-0.38)
\psline(-0.84,-0.38)(-0.75,-0.38)
\psline(-0.75,-0.38)(-0.75,-0.29)
\psline(-0.75,-0.29)(-0.84,-0.29)
\psline(-0.84,-0.29)(-0.84,-0.38)
\psline(-0.84,-0.45)(-0.75,-0.45)
\psline(-0.75,-0.45)(-0.75,-0.54)
\psline(-0.75,-0.54)(-0.84,-0.54)
\psline(-0.84,-0.54)(-0.84,-0.45)
\psline(-0.84,-0.75)(-0.75,-0.75)
\psline(-0.75,-0.75)(-0.75,-0.84)
\psline(-0.75,-0.84)(-0.84,-0.84)
\psline(-0.84,-0.84)(-0.84,-0.75)
\psline(-0.75,-0.91)(-0.75,-1)
\psline(-0.75,-1)(-0.84,-1)
\psline(-0.84,-1)(-0.84,-0.91)
\psline(-0.84,-0.91)(-0.75,-0.91)
\psline(-0.29,-1)(-0.38,-1)
\psline(-0.38,-1)(-0.38,-0.91)
\psline(-0.38,-0.91)(-0.29,-0.91)
\psline(-0.29,-0.91)(-0.29,-1)
\psline(-0.45,-1)(-0.54,-1)
\psline(-0.54,-1)(-0.54,-0.91)
\psline(-0.54,-0.91)(-0.45,-0.91)
\psline(-0.45,-0.91)(-0.45,-1)
\psline(-0.29,-0.84)(-0.38,-0.84)
\psline(-0.38,-0.84)(-0.38,-0.75)
\psline(-0.38,-0.75)(-0.29,-0.75)
\psline(-0.29,-0.75)(-0.29,-0.84)
\psline(-0.45,-0.84)(-0.54,-0.84)
\psline(-0.54,-0.84)(-0.54,-0.75)
\psline(-0.54,-0.75)(-0.45,-0.75)
\psline(-0.45,-0.75)(-0.45,-0.84)
\psline(-0.29,-0.54)(-0.38,-0.54)
\psline(-0.38,-0.54)(-0.38,-0.45)
\psline(-0.38,-0.45)(-0.29,-0.45)
\psline(-0.29,-0.45)(-0.29,-0.54)
\psline(-0.45,-0.54)(-0.54,-0.54)
\psline(-0.54,-0.54)(-0.54,-0.45)
\psline(-0.54,-0.45)(-0.45,-0.45)
\psline(-0.45,-0.45)(-0.45,-0.54)
\psline(-0.45,-0.38)(-0.54,-0.38)
\psline(-0.54,-0.38)(-0.54,-0.29)
\psline(-0.54,-0.29)(-0.45,-0.29)
\psline(-0.45,-0.29)(-0.45,-0.38)
\psline(-0.29,-0.38)(-0.38,-0.38)
\psline(-0.38,-0.38)(-0.38,-0.29)
\psline(-0.38,-0.29)(-0.29,-0.29)
\psline(-0.29,-0.29)(-0.29,-0.38)
\parametricplot[linewidth=1.6pt]{1.5707963267948954}{3.1415926535897922}{1*0.1*cos(t)+0*0.1*sin(t)+0.75|0*0.1*cos(t)+1*0.1*sin(t)+0.54}
\psline[linewidth=1.6pt](0.65,0.54)(0.65,0.29)
\psline[linewidth=1.6pt,linestyle=dotted](0.65,0.29)(0.65,0)
\psline[linewidth=1.6pt](0.65,0)(0,0)
\psline[linewidth=1.6pt](0.75,0.65)(0.87,0.65)
\psline[linewidth=1.6pt,linestyle=dotted](0.87,0.65)(0.87,0.75)
\psline[linewidth=1.6pt](0.87,0.75)(0.87,0.84)
\parametricplot[linewidth=1.6pt]{1.5707963267949272}{3.14159265358979}{1*0.04*cos(t)+0*0.04*sin(t)+0.91|0*0.04*cos(t)+1*0.04*sin(t)+0.84}
\begin{scriptsize}
\psdots[dotstyle=*](0,0)
\psdots[dotstyle=*,linecolor=darkgray](0.65,0.65)
\psdots[dotstyle=*,linecolor=darkgray](0.87,0.87)
\end{scriptsize}
\end{pspicture*}
\begin{pspicture*}(-1.27,-1.32)(1.55,1.1)
\psline(-1,1)(-0.29,1)
\psline(-0.29,1)(-0.29,0.29)
\psline(-0.29,0.29)(-1,0.29)
\psline(-1,0.29)(-1,1)
\psline(0.29,1)(1,1)
\psline(1,1)(1,0.29)
\psline(1,0.29)(0.29,0.29)
\psline(0.29,0.29)(0.29,1)
\psline(-1,-0.29)(-0.29,-0.29)
\psline(-0.29,-0.29)(-0.29,-1)
\psline(-0.29,-1)(-1,-1)
\psline(-1,-1)(-1,-0.29)
\psline(0.29,-0.29)(1,-0.29)
\psline(1,-0.29)(1,-1)
\psline(1,-1)(0.29,-1)
\psline(0.29,-1)(0.29,-0.29)
\psline[linewidth=1.6pt](0,-1)(0,-0.29)
\psline[linewidth=1.6pt](0.29,0)(0.65,0)
\psline[linewidth=1.6pt,linestyle=dotted](0.65,0)(0.65,0.29)
\psarc[linewidth=1.6pt](0.29,-0.29){0.5}{90}{180}
\rput[tl](-0.25,0.2){$ x_l $}
\rput[tl](0.79,0.16){$ p_l $}
\rput[tl](-0.25,-0.05){$ q_{l} $}
\rput[tl](-0.25,-0.6){$ r_{l} $}
\rput[tl](-0.09,-1.08){$ u_{l-1} $}
\rput[tl](0.37,0.23){$s_l $}
\rput[tl](0.6,0.48){$ u_l $}
\rput[tl](0.6,-0.1){$ y_l $}
\begin{scriptsize}
\psdots[dotstyle=*](0,-1)
\psdots[dotstyle=*](0,0)
\psdots[dotstyle=*](0.29,0)
\psdots[dotstyle=*](0.29,-0.29)
\psdots[dotstyle=*](0,-0.29)
\psdots[dotstyle=*](0.65,0)
\psdots[dotstyle=*](0.65,0.29)
\end{scriptsize}
\end{pspicture*}

\caption{The path used in the proof of Theorem \ref{thm:QHBC}. \label{pic:geodeesi1}}

\end{figure}

For the circle arc the radius is $\alpha \tfrac12 \big(\tfrac{1-\alpha}{2}\big)^{l}$ and hence the quasihyperbolic length of the circle arc is
\[
k(q_l) = \frac{\tfrac{\pi}2 \alpha \tfrac12 \big(\tfrac{1-\alpha}{2}\big)^{l}}{\alpha \tfrac12 \big(\tfrac{1-\alpha}{2}\big)^{l}} = \frac{\pi}2.
\]
There are two line segments inside the square $Q_{l-1}^j$. The longer has length $\ell(Q_l^j) = \big(\tfrac{1-\alpha}{2}\big)^{l}$ and the shorter $\tfrac12 \ell(Q_l^j)$. In both parts the distance to the boundary is equal to or greater than $\tfrac12 \alpha \big(\tfrac{1-\alpha}{2}\big)^{l-1}$. For the line segments we obtain an upper bound for the quasihyperbolic length
\[
  k(r_l) + k(s_l) = \frac{\tfrac32 \big(\tfrac{1-\alpha}{2}\big)^{l}}{\tfrac12 \alpha \big(\tfrac{1-\alpha}{2}\big)^{l-1}} = \frac3{\alpha} \Big(\frac{1-\alpha}{2}\Big).
\]
Putting these three estimates together and adding the first and last parts of the path, we have 

\begin{eqnarray}
k_{\Omega_\alpha}(x_0, x_n) & \le &  \frac{\tfrac12 \alpha + \frac12 \big(\tfrac{1-\alpha}{2}\big) }{\tfrac12 \alpha} + n  \log{\frac{2 + \sqrt{4 + (1-\alpha)^2}}{1-\alpha}}\nonumber\\
  & & + (n-1)\bigg(\frac3{\alpha} \Big(\frac{1-\alpha}{2}\Big) +  \frac{\pi}2 \bigg) + \frac{\tfrac12 \big(\tfrac{1-\alpha}{2}\big)^n}{\tfrac12 \alpha \big(\tfrac{1-\alpha}{2}\big)^n}\nonumber\\
&=& 2  - \frac{\pi}2 + n \Big( \log{\frac{2 + \sqrt{4 + (1-\alpha)^2}}{1-\alpha}}  + \frac3{2\alpha} + \frac{\pi}2- \frac32\Big).\label{kupperbound}
\end{eqnarray}

Next we calculate a lower bound for the quasihyperbolic distance. We do not need to know where exactly quasihyperbolic geodesic is located. But if a geodesic connects $x_0$ and $x_n$ in the upper right corner, then the geodesic should go from the boundary of $Q_{l}^j$ to the boundary  of $Q_{l+1}^j$. Thus we can give a lower estimate to the quasihyperbolic distance $k(u_{l-1},u_l)$. First we estimate the path from the  boundary of $Q_{l}^j$ to the 'middle square' of $Q_{l}^j$. Here the shortest route is in the middle of the strip and in the same time the distance to $C_\alpha$ is the greatest. Thus we obtain
\[
k(r_l) \ge \frac{\big(\tfrac{1-\alpha}{2}\big)^{l+1}}{\frac12 \alpha \big(\tfrac{1-\alpha}{2}\big)^{l}} = \frac{2}{\alpha} \Big(\frac{1-\alpha}{2}\Big). 
\]
Then we estimate the path across the 'middle square' to the boundary of $Q_{l+1}^j$.
We use a circular arc to estimate the path through the 'middle square', see Figure \ref{pic:geodeesi1}, and obtain $k(q_l) \ge \tfrac{\pi}{2}$. Finally we estimate the path from the 'middle square' to the boundary of $Q_{l+1}^j$. In the boundary of $Q_{l+1}^j$ the distance to $C_\alpha$ is at most $\tfrac12 \alpha \ell(Q_{l+1}^j)$. Thus we get a lower estimate for the later half by approaching to the middle of the strip perpendicular to the boundary of $Q_{l+1}^j$ as we did in the dotted part of the upper bound. We get the term $\log{\frac{2 + \sqrt{4 + (1-\alpha)^2}}{1-\alpha}}$. 
Collecting the terms together we obtain
\begin{eqnarray}
k_{\Omega_\alpha}(x_0,x_n) &\ge n \frac{2}{\alpha} \Big(\frac{1-\alpha}{2}\Big) + (n-1) \frac{\pi}{2} + n \log{\frac{2 + \sqrt{4 + (1-\alpha)^2}}{1-\alpha}}\nonumber\\
& = -\frac{\pi}{2} + n \left( \log{\frac{2 + \sqrt{4 + (1-\alpha)^2}}{1-\alpha}}  + \frac{1-\alpha}{\alpha} +  \frac{\pi}{2} \right).\label{klowerbound}
\end{eqnarray}

In the definition of the quasihyperbolic boundary condition we choose $x_0 = 0$ and let $x = x_n$ be a center of $Q_n^j$. Now
\[
  \dist(x_n,\partial C_\alpha) = \sqrt{2} \frac{\alpha}{2} \left( \frac{1-\alpha}{2} \right)^n
\]
and thus
\begin{equation}\label{distx0}
  \log\frac{1}{\dist(x_n,\partial C_\alpha)} = \log \frac{\sqrt{2}}{\alpha}+n \log \frac{2}{1-\alpha}.
\end{equation}
Combining \eqref{kupperbound} and \eqref{distx0}   and letting $n \to \infty$ we deduce that $\Omega_\alpha$ satisfies  \eqref{QHBC} in the QHBC for
\[
  \beta \le \frac{\log \frac{2}{1-\alpha}}{\log{\frac{2 + \sqrt{4 + (1-\alpha)^2}}{1-\alpha}}  + \frac3{2\alpha} + \frac{\pi}2- \frac32}.
\]
Similarly combining \eqref{klowerbound} and \eqref{distx0} and letting $n \to \infty$ we see that $\Omega_\alpha$ does not satisfies \eqref{QHBC} in the definition of the QHBC for
\[
  \beta \ge \frac{\log \frac{2}{1-\alpha}}{\log{\frac{2 + \sqrt{4 + (1-\alpha)^2}}{1-\alpha}}  + \frac{1-\alpha}{\alpha} +  \frac{\pi}{2}}. \qedhere
\]
\end{proof}

\begin{prop}\label{prop:John}
Let $0$ be the John center. Then 
the domain $\Omega_\alpha$ is $c$-John for $c\ge 4.37/\alpha$, and it is not $c$-John for $c\le 4/\alpha$.
\end{prop}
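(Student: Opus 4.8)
The plan is to treat the two inequalities separately but through a common reduction. First I would record the self-similar structure. Writing $K\subset[-\tfrac12,\tfrac12]$ for the one-dimensional Cantor set obtained by repeatedly deleting the middle portion of relative length $\alpha$, one has $C_\alpha=K\times K$, so that for every $z=(z_1,z_2)$ lying near the fractal
\[
 \dist(z,\partial\Omega_\alpha)=\sqrt{\dist(z_1,K)^2+\dist(z_2,K)^2}.
\]
This product formula, together with the fact that each $Q_l^j$ carries a similarity copy of the whole configuration scaled by $(\tfrac{1-\alpha}{2})^l$, makes the quotient $t/\dist(\gamma(t),\partial\Omega_\alpha)$ that governs the John condition scale invariant. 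It therefore suffices to understand how a curve escapes the neighbourhood of one corner of $C_\alpha$ and to take the supremum of that quotient over the position within a single generation. As in the proof of Theorem~\ref{thm:QHBC} the extremal test points are the centres $x_n$ of the nested corner squares, and a general point of $\Omega_\alpha$ is joined to a nearby such centre by a short segment, so no generality is lost by working with the $x_n$.

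For the upper bound I would exhibit an explicit John curve from $x_n$ to $0$ and verify $\dist(\gamma(t),\partial\Omega_\alpha)\ge t/c$ along it. The natural choice is the same polygonal-plus-arc skeleton used in Figure~\ref{pic:geodeesi1}: straight segments running along the centre lines of the connecting strips, where the clearance is the half-width $\tfrac{\alpha}{2}(\tfrac{1-\alpha}{2})^{l-1}$ of the parent strip, joined by quarter-circle arcs rounding each inner corner at radius of order $\tfrac{\alpha}{2}(\tfrac{1-\alpha}{2})^{l}$. Reparametrising by arc length measured from $x_n$, the accumulated length $t$ at any point of generation $l$ is a geometric tail sum of the same order as $(\tfrac{1-\alpha}{2})^{l}$, so $t$ and $\dist(\gamma(t),\partial\Omega_\alpha)$ scale identically and the supremum of their quotient is attained at a scale-independent position. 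The binding positions are where the curve rounds an inner corner or threads the narrowest gap; there the quotient becomes an explicit one-variable function of the position (via a crossing estimate of the type in Lemma~\ref{QHjana}), and I would evaluate its maximum to obtain $4.37/\alpha$. Enlarging the additive constant then absorbs the first and last pieces and the passage to a general $x$.

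For the lower bound I would again start from $x_n$ and argue that no curve to $0$ can hold the quotient below $4/\alpha$. Since the first coordinate of $\gamma$ must decrease from $\approx\tfrac12$ to $0$, for each $l$ it crosses the vertical line through the inner corners of the $l$-th generation; on that line one coordinate already lies in $K$, so the clearance is at most the largest gap half-width reachable there, of order $\tfrac{\alpha}{2}(\tfrac{1-\alpha}{2})^{l}$, while the arc length already spent is bounded below by the distance that must be travelled out of the corner cluster to reach such a gap. Optimising over where the curve crosses — it may detour into a wider gap, but only at the cost of extra length — is intended to produce a trade-off whose best value is $4/\alpha$ in the limit $n\to\infty$, giving that $\Omega_\alpha$ is not $c$-John for $c\le 4/\alpha$.

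The main obstacle is precisely this lower bound. Because $\Omega_\alpha$ is the ball with a totally disconnected set removed, a curve has full two-dimensional freedom to weave around the dust, and the product formula lets it purchase clearance in one coordinate by sitting in a wide gap. The difficulty is to show that this freedom cannot beat the ratio $4/\alpha$: any clearance gained by detouring toward a wider gap must be paid for by a proportional increase in travelled length, uniformly across scales. Making this detour-versus-clearance trade-off quantitative, and matching it against the one-variable optimisation that yields the upper constant $4.37/\alpha$, is where the real work lies.
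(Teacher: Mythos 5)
Your proposal looks for both constants in the wrong place, and this is fatal to the plan. You take the extremal test points to be the centres $x_n$ of the nested corner squares deep inside $Q_0$ and try to extract both $4.37/\alpha$ and $4/\alpha$ from a scale-invariant analysis of how a curve escapes one corner of the dust. But the fractal part of the domain is not the bottleneck: the paper's first step shows that every $x\in\Omega_\alpha\cap Q_0$ can be joined to $0$ by a staircase curve running along the centre lines of the removed strips, and along such a curve the John ratio is always $>\alpha/3$. So the self-similar structure only forces the constant $3/\alpha$, which is \emph{better} than both constants in the statement; no optimisation confined to the corner geometry can produce $4.37$ or $4$. In particular your lower-bound strategy cannot succeed: since the deep corner centres admit curves with ratio $>\alpha/3$, there is no obstruction at those points for any $c\ge 3/\alpha$, so starting from the $x_n$ one can never prove ``not $c$-John for $c\le 4/\alpha$.'' Your scale-invariance claim also fails for the John condition: unlike the quasihyperbolic length in Theorem~\ref{thm:QHBC}, which accumulates additively generation by generation, the John quotient at a binding point involves the arclength of the \emph{whole} journey to the fixed centre $0$, which is dominated by the macroscopic part of the curve; the quotient is therefore not scale-invariant and its supremum is not attained at a scale-independent position.

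Both constants actually come from the points of $\Omega_\alpha\setminus Q_0$ near the outer circle $\partial B(0,2)$, which your product formula $\dist(z,\partial\Omega_\alpha)=\sqrt{\dist(z_1,K)^2+\dist(z_2,K)^2}$ ignores entirely. For the negative part the paper takes $x=i(2-1/n)$: any admissible curve to $0$ has length at least $2-1/n$, while to reach $0$ it must enter the central gap, where the clearance is capped at about $\alpha/2$ (every point of $\partial Q_0$, and of the boundary of the small square $[-\tfrac{\alpha}{2},\tfrac{\alpha}{2}]^2$ around the centre, lies within $\alpha/2$ of $C_\alpha$); hence near the end of the journey the ratio drops to about $\tfrac{\alpha/2}{2}=\tfrac{\alpha}{4}$. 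The number $4$ is the ratio of the diameter-scale $2$ of the ball to the half-width $\tfrac12$ of the gap, times $1/\alpha$ --- it has nothing to do with the fine structure of the dust, and the multi-scale ``detour-versus-clearance trade-off'' you identify as the main difficulty never arises. For the positive part the worst points are the diagonal ones near the circle, $x$ close to $\sqrt2(1+i)$: their John curve must go up the central gap, round the Cantor corner point $(\tfrac{\alpha}{2},\tfrac12)$ on a quarter circle of radius $\tfrac{\alpha}{2}$, and then run straight to $x$; its minimal clearance is $\tfrac{\alpha}{2}$, while the arclength down to the binding point just above the centre is $\tfrac{1-\alpha}{2}+\tfrac{\pi\alpha}{4}+\sqrt{\bigl(\sqrt2-\tfrac12-\tfrac{\alpha}{2}\bigr)^2+\bigl(\sqrt2-\tfrac{\alpha}{2}\bigr)^2}$, and bounding this quotient is exactly what produces $\alpha/4.37$. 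So the real content of the proposition is macroscopic; any correct proof must treat the region between $Q_0$ and the circle $\partial B(0,2)$, which your reduction discards at the outset.
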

\begin{proof}
  We consider first the case that $x \in \Omega_\alpha \cap Q_0$. Let $x_n$ be a center of $Q_n^j$ in an upper right corner. We choose the curve $\gamma_{n,0}$ joining $x_n$ and $x_0$ consisting of horizontal and vertical line segments as in Figure \ref{pic:geodeesi2}. We denote $u_k = \gamma_{n,0} \cap \partial Q_k^j$ and $y_k,z_k \in \gamma_{n,0}$ as in Figure \ref{pic:geodeesi2}. Now
  \[
    \ell(\gamma_{n,0}) = 1-\left( \frac{1-\alpha}{2} \right) ^n
  \]
  and
  \[
    \ell(\gamma_{n,k}) = \left( \frac{1-\alpha}{2} \right) ^k - \left( \frac{1-\alpha}{2} \right) ^n,
  \]
  where $\gamma_{n,k}$ is the subcurve of $\gamma_{n,0}$ connecting $x_n$ to $x_k$ with $k < n$. Let $x \in Q_n^j \setminus \cup Q_{n+1}^j$ and $\gamma_{y} = [x,x_n] \cup \gamma_{n,y}$ for $y \in \gamma_{n,0}$, where $\gamma_{n,y}$ is the subcurve of $\gamma_{n,0}$ connecting $x_n$ to $y$. Now $|x-x_n| < ((1-\alpha)/2)^n \sqrt{1+\alpha^2}/2$ implying
  \begin{eqnarray*}
    \frac{\dist\big(\gamma_{z_k}(\ell(\gamma_{z_k}), \partial \Omega_\alpha)\big)}{\ell(\gamma_{z_k})} & \ge & \frac{\frac{\alpha}{2}\left( \frac{1-\alpha}{2} \right) ^{k}}{ \left( \frac{1-\alpha}{2} \right)^n \frac{\sqrt{1+\alpha^2}}{2} + \left( \frac{1-\alpha}{2} \right) ^{k} - \left( \frac{1-\alpha}{2} \right) ^n + \frac{\alpha}{2}\left( \frac{1-\alpha}{2} \right) ^{k}}\\
  & = & \frac{\alpha}{ \left( \frac{1-\alpha}{2} \right) ^{n-k}(\sqrt{1+\alpha^2}-2)+2-\alpha} > \frac{\alpha}{3}
  \end{eqnarray*}
  and
  \begin{eqnarray*}
    \frac{\dist\big(\gamma_{u_k}(\ell(\gamma_{u_k}), \partial \Omega_\alpha)\big)}{\ell(\gamma_{u_k})} & \ge & \frac{\frac{\alpha}{2}\left( \frac{1-\alpha}{2} \right) ^{k+1}}{ \left( \frac{1-\alpha}{2} \right)^n \frac{\sqrt{1+\alpha^2}}{2} + \left( \frac{1-\alpha}{2} \right) ^{k+1} - \left( \frac{1-\alpha}{2} \right) ^n + \frac{1}{2}\left( \frac{1-\alpha}{2} \right) ^{k+1}}\\
  & = & \frac{\alpha}{ \left( \frac{1-\alpha}{2} \right) ^{n-k-1}(\sqrt{1+\alpha^2}-2)+3} > \frac{\alpha}{3}.
  \end{eqnarray*}
Hence the definition holds if $\tfrac1c \le \tfrac{\alpha}{3}$ i.e. if  $c \ge 3/\alpha$.

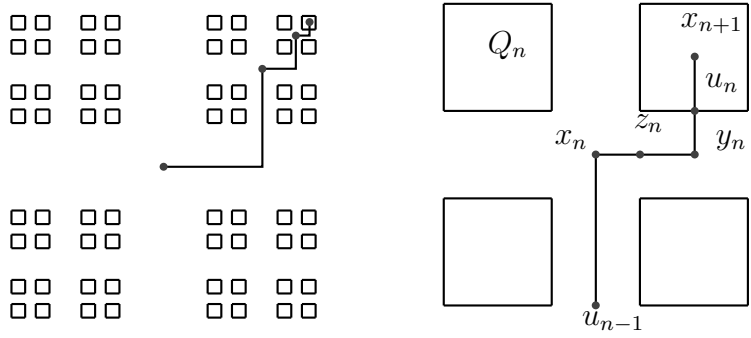
\begin{figure}[ht]
\psset{xunit=20mm,yunit=20mm,algebraic=true,dotstyle=o,dotsize=3pt 0,linewidth=0.8pt,arrowsize=3pt 2,arrowinset=0.25}
\begin{pspicture*}(-1.29,-1.24)(1.52,1.25)
\psline(0.91,1)(1,1)
\psline(1,1)(1,0.91)
\psline(1,0.91)(0.91,0.91)
\psline(0.91,0.91)(0.91,1)
\psline(0.91,0.84)(1,0.84)
\psline(1,0.84)(1,0.75)
\psline(1,0.75)(0.91,0.75)
\psline(0.91,0.75)(0.91,0.84)
\psline(0.91,0.54)(1,0.54)
\psline(1,0.54)(1,0.45)
\psline(1,0.45)(0.91,0.45)
\psline(0.91,0.45)(0.91,0.54)
\psline(0.91,0.38)(1,0.38)
\psline(1,0.38)(1,0.29)
\psline(1,0.29)(0.91,0.29)
\psline(0.91,0.29)(0.91,0.38)
\psline(0.84,0.38)(0.75,0.38)
\psline(0.75,0.38)(0.75,0.29)
\psline(0.75,0.29)(0.84,0.29)
\psline(0.84,0.29)(0.84,0.38)
\psline(0.84,0.45)(0.75,0.45)
\psline(0.75,0.45)(0.75,0.54)
\psline(0.75,0.54)(0.84,0.54)
\psline(0.84,0.54)(0.84,0.45)
\psline(0.84,0.75)(0.75,0.75)
\psline(0.75,0.75)(0.75,0.84)
\psline(0.75,0.84)(0.84,0.84)
\psline(0.84,0.84)(0.84,0.75)
\psline(0.75,0.91)(0.75,1)
\psline(0.75,1)(0.84,1)
\psline(0.84,1)(0.84,0.91)
\psline(0.84,0.91)(0.75,0.91)
\psline(0.29,1)(0.38,1)
\psline(0.38,1)(0.38,0.91)
\psline(0.38,0.91)(0.29,0.91)
\psline(0.29,0.91)(0.29,1)
\psline(0.45,1)(0.54,1)
\psline(0.54,1)(0.54,0.91)
\psline(0.54,0.91)(0.45,0.91)
\psline(0.45,0.91)(0.45,1)
\psline(0.29,0.84)(0.38,0.84)
\psline(0.38,0.84)(0.38,0.75)
\psline(0.38,0.75)(0.29,0.75)
\psline(0.29,0.75)(0.29,0.84)
\psline(0.45,0.84)(0.54,0.84)
\psline(0.54,0.84)(0.54,0.75)
\psline(0.54,0.75)(0.45,0.75)
\psline(0.45,0.75)(0.45,0.84)
\psline(0.29,0.54)(0.38,0.54)
\psline(0.38,0.54)(0.38,0.45)
\psline(0.38,0.45)(0.29,0.45)
\psline(0.29,0.45)(0.29,0.54)
\psline(0.45,0.54)(0.54,0.54)
\psline(0.54,0.54)(0.54,0.45)
\psline(0.54,0.45)(0.45,0.45)
\psline(0.45,0.45)(0.45,0.54)
\psline(0.45,0.38)(0.54,0.38)
\psline(0.54,0.38)(0.54,0.29)
\psline(0.54,0.29)(0.45,0.29)
\psline(0.45,0.29)(0.45,0.38)
\psline(0.29,0.38)(0.38,0.38)
\psline(0.38,0.38)(0.38,0.29)
\psline(0.38,0.29)(0.29,0.29)
\psline(0.29,0.29)(0.29,0.38)
\psline(0.91,-1)(1,-1)
\psline(1,-1)(1,-0.91)
\psline(1,-0.91)(0.91,-0.91)
\psline(0.91,-0.91)(0.91,-1)
\psline(0.91,-0.84)(1,-0.84)
\psline(1,-0.84)(1,-0.75)
\psline(1,-0.75)(0.91,-0.75)
\psline(0.91,-0.75)(0.91,-0.84)
\psline(0.91,-0.54)(1,-0.54)
\psline(1,-0.54)(1,-0.45)
\psline(1,-0.45)(0.91,-0.45)
\psline(0.91,-0.45)(0.91,-0.54)
\psline(0.91,-0.38)(1,-0.38)
\psline(1,-0.38)(1,-0.29)
\psline(1,-0.29)(0.91,-0.29)
\psline(0.91,-0.29)(0.91,-0.38)
\psline(0.84,-0.38)(0.75,-0.38)
\psline(0.75,-0.38)(0.75,-0.29)
\psline(0.75,-0.29)(0.84,-0.29)
\psline(0.84,-0.29)(0.84,-0.38)
\psline(0.84,-0.45)(0.75,-0.45)
\psline(0.75,-0.45)(0.75,-0.54)
\psline(0.75,-0.54)(0.84,-0.54)
\psline(0.84,-0.54)(0.84,-0.45)
\psline(0.84,-0.75)(0.75,-0.75)
\psline(0.75,-0.75)(0.75,-0.84)
\psline(0.75,-0.84)(0.84,-0.84)
\psline(0.84,-0.84)(0.84,-0.75)
\psline(0.75,-0.91)(0.75,-1)
\psline(0.75,-1)(0.84,-1)
\psline(0.84,-1)(0.84,-0.91)
\psline(0.84,-0.91)(0.75,-0.91)
\psline(0.29,-1)(0.38,-1)
\psline(0.38,-1)(0.38,-0.91)
\psline(0.38,-0.91)(0.29,-0.91)
\psline(0.29,-0.91)(0.29,-1)
\psline(0.45,-1)(0.54,-1)
\psline(0.54,-1)(0.54,-0.91)
\psline(0.54,-0.91)(0.45,-0.91)
\psline(0.45,-0.91)(0.45,-1)
\psline(0.29,-0.84)(0.38,-0.84)
\psline(0.38,-0.84)(0.38,-0.75)
\psline(0.38,-0.75)(0.29,-0.75)
\psline(0.29,-0.75)(0.29,-0.84)
\psline(0.45,-0.84)(0.54,-0.84)
\psline(0.54,-0.84)(0.54,-0.75)
\psline(0.54,-0.75)(0.45,-0.75)
\psline(0.45,-0.75)(0.45,-0.84)
\psline(0.29,-0.54)(0.38,-0.54)
\psline(0.38,-0.54)(0.38,-0.45)
\psline(0.38,-0.45)(0.29,-0.45)
\psline(0.29,-0.45)(0.29,-0.54)
\psline(0.45,-0.54)(0.54,-0.54)
\psline(0.54,-0.54)(0.54,-0.45)
\psline(0.54,-0.45)(0.45,-0.45)
\psline(0.45,-0.45)(0.45,-0.54)
\psline(0.45,-0.38)(0.54,-0.38)
\psline(0.54,-0.38)(0.54,-0.29)
\psline(0.54,-0.29)(0.45,-0.29)
\psline(0.45,-0.29)(0.45,-0.38)
\psline(0.29,-0.38)(0.38,-0.38)
\psline(0.38,-0.38)(0.38,-0.29)
\psline(0.38,-0.29)(0.29,-0.29)
\psline(0.29,-0.29)(0.29,-0.38)
\psline(-0.91,1)(-1,1)
\psline(-1,1)(-1,0.91)
\psline(-1,0.91)(-0.91,0.91)
\psline(-0.91,0.91)(-0.91,1)
\psline(-0.91,0.84)(-1,0.84)
\psline(-1,0.84)(-1,0.75)
\psline(-1,0.75)(-0.91,0.75)
\psline(-0.91,0.75)(-0.91,0.84)
\psline(-0.91,0.54)(-1,0.54)
\psline(-1,0.54)(-1,0.45)
\psline(-1,0.45)(-0.91,0.45)
\psline(-0.91,0.45)(-0.91,0.54)
\psline(-0.91,0.38)(-1,0.38)
\psline(-1,0.38)(-1,0.29)
\psline(-1,0.29)(-0.91,0.29)
\psline(-0.91,0.29)(-0.91,0.38)
\psline(-0.84,0.38)(-0.75,0.38)
\psline(-0.75,0.38)(-0.75,0.29)
\psline(-0.75,0.29)(-0.84,0.29)
\psline(-0.84,0.29)(-0.84,0.38)
\psline(-0.84,0.45)(-0.75,0.45)
\psline(-0.75,0.45)(-0.75,0.54)
\psline(-0.75,0.54)(-0.84,0.54)
\psline(-0.84,0.54)(-0.84,0.45)
\psline(-0.84,0.75)(-0.75,0.75)
\psline(-0.75,0.75)(-0.75,0.84)
\psline(-0.75,0.84)(-0.84,0.84)
\psline(-0.84,0.84)(-0.84,0.75)
\psline(-0.75,0.91)(-0.75,1)
\psline(-0.75,1)(-0.84,1)
\psline(-0.84,1)(-0.84,0.91)
\psline(-0.84,0.91)(-0.75,0.91)
\psline(-0.29,1)(-0.38,1)
\psline(-0.38,1)(-0.38,0.91)
\psline(-0.38,0.91)(-0.29,0.91)
\psline(-0.29,0.91)(-0.29,1)
\psline(-0.45,1)(-0.54,1)
\psline(-0.54,1)(-0.54,0.91)
\psline(-0.54,0.91)(-0.45,0.91)
\psline(-0.45,0.91)(-0.45,1)
\psline(-0.29,0.84)(-0.38,0.84)
\psline(-0.38,0.84)(-0.38,0.75)
\psline(-0.38,0.75)(-0.29,0.75)
\psline(-0.29,0.75)(-0.29,0.84)
\psline(-0.45,0.84)(-0.54,0.84)
\psline(-0.54,0.84)(-0.54,0.75)
\psline(-0.54,0.75)(-0.45,0.75)
\psline(-0.45,0.75)(-0.45,0.84)
\psline(-0.29,0.54)(-0.38,0.54)
\psline(-0.38,0.54)(-0.38,0.45)
\psline(-0.38,0.45)(-0.29,0.45)
\psline(-0.29,0.45)(-0.29,0.54)
\psline(-0.45,0.54)(-0.54,0.54)
\psline(-0.54,0.54)(-0.54,0.45)
\psline(-0.54,0.45)(-0.45,0.45)
\psline(-0.45,0.45)(-0.45,0.54)
\psline(-0.45,0.38)(-0.54,0.38)
\psline(-0.54,0.38)(-0.54,0.29)
\psline(-0.54,0.29)(-0.45,0.29)
\psline(-0.45,0.29)(-0.45,0.38)
\psline(-0.29,0.38)(-0.38,0.38)
\psline(-0.38,0.38)(-0.38,0.29)
\psline(-0.38,0.29)(-0.29,0.29)
\psline(-0.29,0.29)(-0.29,0.38)
\psline(-0.91,-1)(-1,-1)
\psline(-1,-1)(-1,-0.91)
\psline(-1,-0.91)(-0.91,-0.91)
\psline(-0.91,-0.91)(-0.91,-1)
\psline(-0.91,-0.84)(-1,-0.84)
\psline(-1,-0.84)(-1,-0.75)
\psline(-1,-0.75)(-0.91,-0.75)
\psline(-0.91,-0.75)(-0.91,-0.84)
\psline(-0.91,-0.54)(-1,-0.54)
\psline(-1,-0.54)(-1,-0.45)
\psline(-1,-0.45)(-0.91,-0.45)
\psline(-0.91,-0.45)(-0.91,-0.54)
\psline(-0.91,-0.38)(-1,-0.38)
\psline(-1,-0.38)(-1,-0.29)
\psline(-1,-0.29)(-0.91,-0.29)
\psline(-0.91,-0.29)(-0.91,-0.38)
\psline(-0.84,-0.38)(-0.75,-0.38)
\psline(-0.75,-0.38)(-0.75,-0.29)
\psline(-0.75,-0.29)(-0.84,-0.29)
\psline(-0.84,-0.29)(-0.84,-0.38)
\psline(-0.84,-0.45)(-0.75,-0.45)
\psline(-0.75,-0.45)(-0.75,-0.54)
\psline(-0.75,-0.54)(-0.84,-0.54)
\psline(-0.84,-0.54)(-0.84,-0.45)
\psline(-0.84,-0.75)(-0.75,-0.75)
\psline(-0.75,-0.75)(-0.75,-0.84)
\psline(-0.75,-0.84)(-0.84,-0.84)
\psline(-0.84,-0.84)(-0.84,-0.75)
\psline(-0.75,-0.91)(-0.75,-1)
\psline(-0.75,-1)(-0.84,-1)
\psline(-0.84,-1)(-0.84,-0.91)
\psline(-0.84,-0.91)(-0.75,-0.91)
\psline(-0.29,-1)(-0.38,-1)
\psline(-0.38,-1)(-0.38,-0.91)
\psline(-0.38,-0.91)(-0.29,-0.91)
\psline(-0.29,-0.91)(-0.29,-1)
\psline(-0.45,-1)(-0.54,-1)
\psline(-0.54,-1)(-0.54,-0.91)
\psline(-0.54,-0.91)(-0.45,-0.91)
\psline(-0.45,-0.91)(-0.45,-1)
\psline(-0.29,-0.84)(-0.38,-0.84)
\psline(-0.38,-0.84)(-0.38,-0.75)
\psline(-0.38,-0.75)(-0.29,-0.75)
\psline(-0.29,-0.75)(-0.29,-0.84)
\psline(-0.45,-0.84)(-0.54,-0.84)
\psline(-0.54,-0.84)(-0.54,-0.75)
\psline(-0.54,-0.75)(-0.45,-0.75)
\psline(-0.45,-0.75)(-0.45,-0.84)
\psline(-0.29,-0.54)(-0.38,-0.54)
\psline(-0.38,-0.54)(-0.38,-0.45)
\psline(-0.38,-0.45)(-0.29,-0.45)
\psline(-0.29,-0.45)(-0.29,-0.54)
\psline(-0.45,-0.54)(-0.54,-0.54)
\psline(-0.54,-0.54)(-0.54,-0.45)
\psline(-0.54,-0.45)(-0.45,-0.45)
\psline(-0.45,-0.45)(-0.45,-0.54)
\psline(-0.45,-0.38)(-0.54,-0.38)
\psline(-0.54,-0.38)(-0.54,-0.29)
\psline(-0.54,-0.29)(-0.45,-0.29)
\psline(-0.45,-0.29)(-0.45,-0.38)
\psline(-0.29,-0.38)(-0.38,-0.38)
\psline(-0.38,-0.38)(-0.38,-0.29)
\psline(-0.38,-0.29)(-0.29,-0.29)
\psline(-0.29,-0.29)(-0.29,-0.38)
\psline(0,0)(0.65,0)(0.65,0.65)(0.87,0.65)(0.87,0.87)(0.96,0.87)(0.96,0.96)
\begin{scriptsize}
\psdots[dotstyle=*,linecolor=darkgray](0,0)
\psdots[dotstyle=*,linecolor=darkgray](0.65,0.65)
\psdots[dotstyle=*,linecolor=darkgray](0.87,0.87)
\psdots[dotstyle=*,linecolor=darkgray](0.96,0.96)
\end{scriptsize}
\end{pspicture*}
\begin{pspicture*}(-1.27,-1.32)(1.55,1.27)
\psline(-1,1)(-0.29,1)
\psline(-0.29,1)(-0.29,0.29)
\psline(-0.29,0.29)(-1,0.29)
\psline(-1,0.29)(-1,1)
\psline(0.29,1)(1,1)
\psline(1,1)(1,0.29)
\psline(1,0.29)(0.29,0.29)
\psline(0.29,0.29)(0.29,1)
\psline(-1,-0.29)(-0.29,-0.29)
\psline(-0.29,-0.29)(-0.29,-1)
\psline(-0.29,-1)(-1,-1)
\psline(-1,-1)(-1,-0.29)
\psline(0.29,-0.29)(1,-0.29)
\psline(1,-0.29)(1,-1)
\psline(1,-1)(0.29,-1)
\psline(0.29,-1)(0.29,-0.29)
\psline(0,-1)(0,0)(0.65,0)(0.65,0.65)
\rput[tl](-0.27,0.16){$ x_n $}
\rput[tl](0.79,0.16){$ y_n $}
\rput[tl](0.56,0.94){$ x_{n+1} $}
\rput[tl](-0.09,-1.04){$ u_{n-1} $}
\rput[tl](0.25,0.27){$ z_n $}
\rput[tl](0.72,0.54){$ u_n $}
\rput[tl](-0.71,0.82){$ Q_n $}
\begin{scriptsize}
\psdots[dotstyle=*,linecolor=darkgray](0,-1)
\psdots[dotstyle=*,linecolor=darkgray](0,0)
\psdots[dotstyle=*,linecolor=darkgray](0.29,0)
\psdots[dotstyle=*,linecolor=darkgray](0.65,0)
\psdots[dotstyle=*,linecolor=darkgray](0.65,0.65)
\psdots[dotstyle=*,linecolor=darkgray](0.65,0.29)
\end{scriptsize}
\end{pspicture*}
\caption{The curve $\gamma_{n,0}$ and points $y_n$, $z_n$ and $u_n$ used in the proof of Proposition~\ref{prop:John}.\label{pic:geodeesi2}}
\end{figure}

  Let us then consider $x_n \in \Omega_\alpha \setminus Q_0 = B^2(2) \setminus Q_0$. Let $x_n = i(2-1/n)$ and $\gamma_n$ is the line segment joining $x_n$ to $x_0$. Now
  \[
    \frac{\dist(\gamma_n(t), \partial \Omega_\alpha)}{\ell(\gamma_n(t))} \le \frac{\frac{\alpha}{2}}{2-1/n} = \frac{\alpha}{4-2/n} < \frac{\alpha}{4}
  \]
  and hence the definition does not hold if $\tfrac1c \ge \tfrac{\alpha}{4}$ i.e. if  $c \le 4/\alpha$.

  Let $x_n = \sqrt{2}(1+i)(2-1/n)$ and $\gamma_n = [x_0,i/2] \cup \delta \cup [\alpha/2+i(1+\alpha)/2,x_n]$, where $\delta$ is the circular arc joining $i/2$ and $\alpha/2+i(1+\alpha)/2$ with center at $\alpha/2+i/2$. Now
  \begin{eqnarray*}
    \frac{\dist(\gamma_n(t), \partial \Omega_\alpha)}{\ell(\gamma_n(t))} & \ge & \lim_{n \to \infty} \frac{\dist(\gamma_n(t))}{\ell(\gamma_n(t))} \\
    & = & \frac{\frac{\alpha}{2}}{\frac{1-\alpha}{2}+\frac{\pi \alpha}{4}+\sqrt{(\sqrt{2}-\frac{1}{2}-\frac{\alpha}{2})^2+(\sqrt{2}-\frac{1}{2}+\frac{1-\alpha}{2})^2}}\\
    & = & \frac{\alpha}{1-\alpha+\frac{\pi \alpha}{2}+\sqrt{17-4\sqrt{2}+2\alpha (1-4\sqrt{2}+\alpha^2)}}\\
    & > & \frac{\alpha}{1+\sqrt{17-4\sqrt{2}}} > \frac{\alpha}{4.37}.
  \end{eqnarray*}
  Hence the definition holds if $\tfrac1c \le \tfrac{\alpha}{4.37}$ i.e. if  $c \ge 4.37/\alpha$.

  By the geometry it is clear that the assertion follows.
\end{proof}

When the parameter $\alpha$ is small then the origin is no longer a good choice for the John center. In the next theorem we use $5i/4$ instead and get a slightly better result. Most probably the optimal John center should depend on $\alpha$ and thus have the form $c(\alpha)i$.  

\begin{thm}\label{thm:John}
The domain $\Omega_\alpha$ is $4.37/\alpha$-John for $\alpha \in [1/3,1)$ and $3/\alpha$-John for $\alpha \in (0,1/3)$.
\end{thm}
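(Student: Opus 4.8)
The plan is to prove the two ranges separately, using a different John center in each. For $\alpha \in [1/3,1)$ I would simply invoke Proposition~\ref{prop:John}: with the origin as center it already gives that $\Omega_\alpha$ is $c$-John for every $c \ge 4.37/\alpha$, which is exactly the assertion. The entire content of the theorem is therefore concentrated in the range $\alpha \in (0,1/3)$, where the origin is a poor center (the far diagonal points forced the constant $4.37/\alpha$); there I would instead take the John center to be $p = 5i/4$, a point lying above the square $Q_0$ with $\dist(p,\partial\Omega_\alpha)\approx 3/4$, and re-run the case analysis of Proposition~\ref{prop:John}.

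For the interior points $x \in Q_0\cap\Omega_\alpha$ I would reuse verbatim the curves $\gamma_{n,0}$ of Proposition~\ref{prop:John}, which already satisfy the cone condition with ratio $>\alpha/3$, and merely append the vertical segment $[0,5i/4]$. Along this added segment the distance to $C_\alpha$ stays $\ge \alpha/2$ inside $Q_0$ and grows once one leaves $Q_0$, so the only new binding point is the top edge $(0,1/2)$: there the arclength from $x$ is $\ell(\gamma_{n,0})+1/2 \to 3/2$ while the distance is $\alpha/2$, giving ratio $\to \alpha/3$. Hence the interior points are controlled by the constant $3/\alpha$, for every $\alpha$.

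The real work is the outer points $x\in B(0,2)\setminus Q_0$. Since $p=5i/4$ sits above $Q_0$, every point in the upper half and on the sides is joined to $p$ by a straight segment passing above $Q_0$, never approaching $C_\alpha$, so there the cone ratio is governed by $\dist(\cdot,\partial B(0,2))$ alone and is bounded below by a dimensionless constant. The hard points are the lowest ones, typified by $x\approx -2i$: the segment $[x,p]$ would run up the central gap of $Q_0$, where the distance to $C_\alpha$ is only $\alpha/2$ while the arclength has already reached $\approx 5/2$, yielding ratio $\approx \alpha/5$, too small. For such points I would instead route the curve around one side of $Q_0$, staying in the wide annulus $B(0,2)\setminus Q_0$; the distance to $C_\alpha$ is then of geometric size (comparable to $\dist(\cdot,\partial Q_0)$) rather than of fractal size $\alpha$, and estimating the worst position---near the far corner $(1/2,1/2)$, where the arclength is largest---bounds the cone ratio below by a fixed constant $\ge 1/9$. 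This is precisely where $\alpha<1/3$ enters: the interior points only demand ratio $\ge\alpha/3$, and as $\alpha<1/3$ gives $\alpha/3<1/9$, the geometric lower bound for the outer points automatically clears the interior threshold, so the single constant $3/\alpha$ serves for all of $\Omega_\alpha$.

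The main obstacle is exactly this outer estimate for the lowest points: one must commit to an explicit family of side-routing curves, locate the largest value of $t/\dist(\gamma(t))$ along them---balancing the approach to the two corners of $Q_0$ against the approach to $\partial B(0,2)$---and verify that the resulting geometric constant is at most $9$. Once that is in hand, combining it with the interior bound $3/\alpha$ and the restriction $\alpha<1/3$ shows that every point of $\Omega_\alpha$ satisfies the cone condition with $c=3/\alpha$, completing the proof.
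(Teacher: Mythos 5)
You follow the paper's proof step for step: for $\alpha\in[1/3,1)$ both you and the paper simply quote Proposition~\ref{prop:John}; for $\alpha\in(0,1/3)$ both move the John center to $5i/4$, handle points of $\Omega_\alpha\cap Q_0$ by the staircase curves of Proposition~\ref{prop:John} with the segment $[0,5i/4]$ appended (binding ratio $\alpha/3$), and handle points of $B(0,2)\setminus Q_0$ by curves staying at macroscopic distance from $C_\alpha$, so that a dimensionless bound exceeding $1/9>\alpha/3$ closes the case $\alpha<1/3$. The genuine gap is that the decisive outer-point estimate is asserted rather than proved: what you call ``the main obstacle'' --- committing to explicit curves and verifying that the geometric constant is at most $9$ --- is exactly the content of this case in the paper. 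The paper settles it with a single uniform construction, valid for every $y\in B(0,2)\setminus Q_0$ with no case split: follow the radial segment $[y,\,5y/(4|y|)]$ and then the shorter arc of the circle $\{|z|=5/4\}$ ending at $5i/4$. Along this curve $|z|\ge\tfrac54$, so the distance to $C_\alpha\subset\overline{Q_0}$ is at least $\tfrac54-\tfrac{\sqrt2}{2}$, while on the radial part the distance to $\partial B(0,2)$ is at least the arclength already travelled; since the total length is at most $\tfrac34+\tfrac{5\pi}{4}$, the John ratio is at least
\[
\frac{\tfrac54-\tfrac{\sqrt2}{2}}{\tfrac34+\tfrac{5\pi}{4}}=\frac{5-2\sqrt2}{5\pi+3}>\frac19 .
\]

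Two further remarks. First, your auxiliary claim that every point ``in the upper half and on the sides'' can be joined to $5i/4$ by a straight segment passing above $Q_0$ is false: for points just to the right of $Q_0$ and near the real axis (say $0.6+0.1i$) that segment cuts through $Q_0$ and may even meet $C_\alpha$; this does not hurt you only because the radial-plus-arc curve renders any case distinction among outer points unnecessary. Second, a blemish you share with the paper: along the appended segment $[0,5i/4]$ the worst point is not the top edge of $Q_0$ but slightly above it. At $(0,\tfrac12+s)$ the distance to $C_\alpha$ equals $\sqrt{(\alpha/2)^2+s^2}$ (the nearest dust points being $(\pm\alpha/2,\tfrac12)$), while the arclength tends to $\tfrac32+s$ as $n\to\infty$; minimizing over $s$ gives the ratio $\frac{\alpha}{3\sqrt{1+\alpha^2/9}}$ at $s=\alpha^2/6$, which is strictly smaller than $\alpha/3$. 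So these curves actually establish the John constant $\tfrac3\alpha\sqrt{1+\alpha^2/9}$ --- within one percent of $3/\alpha$ for $\alpha<1/3$, but not equal to it; neither your sketch nor the paper's proof addresses this point.
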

\begin{proof}
  By Proposition~\ref{prop:John} the domain $\Omega_\alpha$ is $4.37/\alpha$-John and thus we need to show that for $\alpha < 1/3$ it is $3/\alpha$-John.

  Let $\alpha < 1/3$ and choose $x_0 = 5i/4$ to be the John center. By the proof of Proposition~\ref{prop:John} it is clear that for all $y \in \Omega_\alpha \cap Q_0$ we have
  \[
    \frac{\dist(\gamma(t), \partial \Omega_\alpha)}{\ell(\gamma(t))} > \frac{\alpha}{3},
  \]
  where $\gamma = \gamma' \cup [0,x_0]$ is the curve joining $y$ to $x_0$ and $\gamma$ is as in Figure \ref{pic:geodeesi2}.

  Let us now assume that $y \in B(0,2) \setminus Q_0$. We consider the curve $\gamma$ from $y$ to $x_0$, which consists of the line segment $[y,5y/(4|y|)]$ and the shortest circular arc from $5y/(4|y|)$ to $x_0$ with center at 0. By the selection of $\gamma$ we obtain
  \[
    \frac{\dist(\gamma(t), \partial \Omega_\alpha)}{\ell(\gamma(t))} > \frac{\frac{5}{4}-\frac{\sqrt{2}}{2}}{\pi\frac{5}{4}+\frac{3}{4}} = \frac{5-2\sqrt{2}}{5\pi+3} > \frac{1}{9} > \frac{\alpha}{3},
  \]
  where the last inequality follows from the fact that $\alpha < 1/3$. Now the assertion follows as $\Omega_\alpha$ is $\frac{3}{\alpha}$-John.
\end{proof}

\section{von Koch snowflake domain}

We construct a von Koch snowflake. Let $a \in (0,1/2]$. We start with an equilateral triangle with side length 1. We replace the middle $a$-th portion of each line segment by  the other two sides of an equilateral triangle.  We continue inductively and obtain a von Koch snowflake. We denote by $S_a$ the bounded domain bordered by the von Koch snowflake. Then $\partial S_a$ is self-similar and thus its Hausdorff dimension is equal to its Minkowski dimension \cite[Lemma 3.1, p. 488]{Lap91}. Note that for $a \in (0,1/2)$, $\partial S_a$ is not self-intersecting \cite[Theorem 3.1]{KelP10}. The Minkowski dimension  of $\partial S_a$ is the solution of $2a^s + 2 \big(\tfrac12 (1-a) \big)^s =1$ for $a \in (0,1/2)$,  \cite[Example~9.5, p.~120]{Fal90}.

\begin{thm}\label{koch:QHBC}
  The domain $S_a \subset \R^2$ satisfies the $\beta$-QHBC for
  \[
    \beta \le \beta_1= \frac{\log \frac{1}{a}}{\log \frac{1+\sqrt{1+3a^2}}{a\sqrt{3}} + \log\big(3 +2 \sqrt{3}\big)}
  \]
  and it does not satisfy $\beta$-QHBC for
  \[
    \beta \ge \beta_2= \frac{\log \frac{1}{a}}{\sqrt{\arcsin^2 \frac{\sqrt{3}}{\sqrt{2(1+a)(3+2a)}}+\log^2 \frac{\sqrt{(1+a)(3+2a)}}{a\sqrt{2}}}}.
  \]
\end{thm}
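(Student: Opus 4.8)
The plan is to follow the template of Theorem~\ref{thm:QHBC}: fix the distinguished point $x_0$ at the centroid of the initial triangle, and construct a sequence $x_n\in S_a$ that marches into the deepest self-similar cusp — the limit of a chain of nested spikes, each erected on a side of the previous one, so that the relevant linear scale is multiplied by $a$ at every level. For such $x_n$ one has $\dist(x_n,\partial S_a)=c\,a^n$ for an explicit constant $c$, hence
\[
  \log\frac{1}{\dist(x_n,\partial S_a)}=n\log\frac1a+O(1),
\]
which produces the common numerator $\log\frac1a$ of both $\beta_1$ and $\beta_2$. The theorem then reduces to sandwiching $k_{S_a}(x_0,x_n)$ between $n$ times the two per-level constants appearing in the denominators and letting $n\to\infty$; a general $x\in S_a$ is handled as in Theorem~\ref{thm:QHBC}, by joining $x$ to a nearby reference point with a single segment and absorbing the cost into the additive constant $c$ of \eqref{QHBC}.

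For the upper bound (QHBC for $\beta\le\beta_1$) I would exhibit an explicit curve from $x_0$ to $x_n$ built level by level from line segments and a circular arc, exactly as in Figure~\ref{pic:geodeesi1}. Each level contributes two pieces. The first is the cost of threading the ``gate'' at the base of the next spike: modelling the two nearest boundary vertices as punctures and descending along their perpendicular bisector, Lemma~\ref{QHjana} gives this cost as $\log\frac{2h+\sqrt{D^2+4h^2}}{D}$, and a computation of the local lengths $D$ and $h$ at scale $a^{l}$ (which turn out to satisfy $D=2\sqrt3\,a\,h$) collapses it to the level-independent value $\log\frac{1+\sqrt{1+3a^2}}{a\sqrt3}$. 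The second piece is the traversal along the side of the parent spike from near its apex down to this gate; since the distance to $\partial S_a$ grows essentially linearly with the distance from the apex, this integrates to a logarithm, and summing the explicit segment and arc lengths yields the constant $\log(3+2\sqrt3)$. Adding the two and multiplying by $n$ gives $k_{S_a}(x_0,x_n)\le n\big(\log\frac{1+\sqrt{1+3a^2}}{a\sqrt3}+\log(3+2\sqrt3)\big)+O(1)$, and comparison with the distance estimate forces $\beta\le\beta_1$.

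For the lower bound (failure of QHBC for $\beta\ge\beta_2$) I would not try to locate the geodesic. Instead, any curve joining $x_0$ to $x_n$ must pass from the level $l-1$ region to the level $l$ region, so it contains a subarc joining two reference points $u_{l-1},u_l$ that straddle the $l$-th apex $\xi_l\in\partial S_a$. Since $S_a\subset\R^2\setminus\{\xi_l\}$, monotonicity of the quasihyperbolic metric together with Proposition~\ref{MOprop} gives
\[
  k_{S_a}(u_{l-1},u_l)\ge k_{\R^2\setminus\{\xi_l\}}(u_{l-1},u_l)=\sqrt{\theta^2+\log^2\tfrac{|u_{l-1}-\xi_l|}{|u_l-\xi_l|}},
\]
where $\theta=\measuredangle(u_{l-1},\xi_l,u_l)$. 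Computing the angle and the ratio of distances at the generic level gives $\theta=\arcsin\frac{\sqrt3}{\sqrt{2(1+a)(3+2a)}}$ and $\frac{|u_{l-1}-\xi_l|}{|u_l-\xi_l|}=\frac{\sqrt{(1+a)(3+2a)}}{a\sqrt2}$; summing the disjoint per-level bounds yields $k_{S_a}(x_0,x_n)\ge n\sqrt{\arcsin^2\frac{\sqrt3}{\sqrt{2(1+a)(3+2a)}}+\log^2\frac{\sqrt{(1+a)(3+2a)}}{a\sqrt2}}-O(1)$, and the comparison with the distance estimate as $n\to\infty$ shows that \eqref{QHBC} cannot hold once $\beta\ge\beta_2$.

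The main obstacle is geometric bookkeeping rather than any new idea: one must pin down the exact nested-spike chain that is extremal, verify that the constructed upper-bound path actually stays inside $S_a$ with the claimed distance-to-boundary lower bounds (so that the arcs and segments integrate to exactly $\log(3+2\sqrt3)$ and the Lemma~\ref{QHjana} term), and — for the lower bound — justify that the per-level subarcs are disjoint and that a single puncture $\xi_l$ already forces the stated cost, i.e.\ that replacing $S_a$ by the once-punctured plane loses nothing to leading order. Getting the trigonometric constant $\theta$ and the distance ratio exactly right, so that they assemble into the stated closed forms for $\beta_1$ and $\beta_2$, is where the care is required.
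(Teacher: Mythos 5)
Your proposal matches the paper's own proof in all essentials: the same chain of spike centers $x_n$ with $\dist(x_n,\partial S_a)\asymp a^n$, the same per-level upper bound via Lemma~\ref{QHjana} (your gate term $\log\frac{1+\sqrt{1+3a^2}}{a\sqrt3}$ and the constant $\log(3+2\sqrt3)$ are exactly the paper's two estimates $k_{S_a}(x_n,y_{n+1})$ and $k_{S_a}(x_n,y_n)$), and the same lower bound by monotonicity into a once-punctured plane at a boundary vertex combined with Proposition~\ref{MOprop}, summed over levels and compared with $\log\frac{1}{\dist(x_n,\partial S_a)}$ as $n\to\infty$. The only cosmetic differences are that the paper punctures at the corner $z_n$ adjacent to the gate rather than an ``apex'' and uses only straight segments (no circular arcs) in the von Koch case, so your plan is essentially the paper's argument.
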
 

\begin{figure}[ht]
  \includegraphics[width=.45\textwidth]{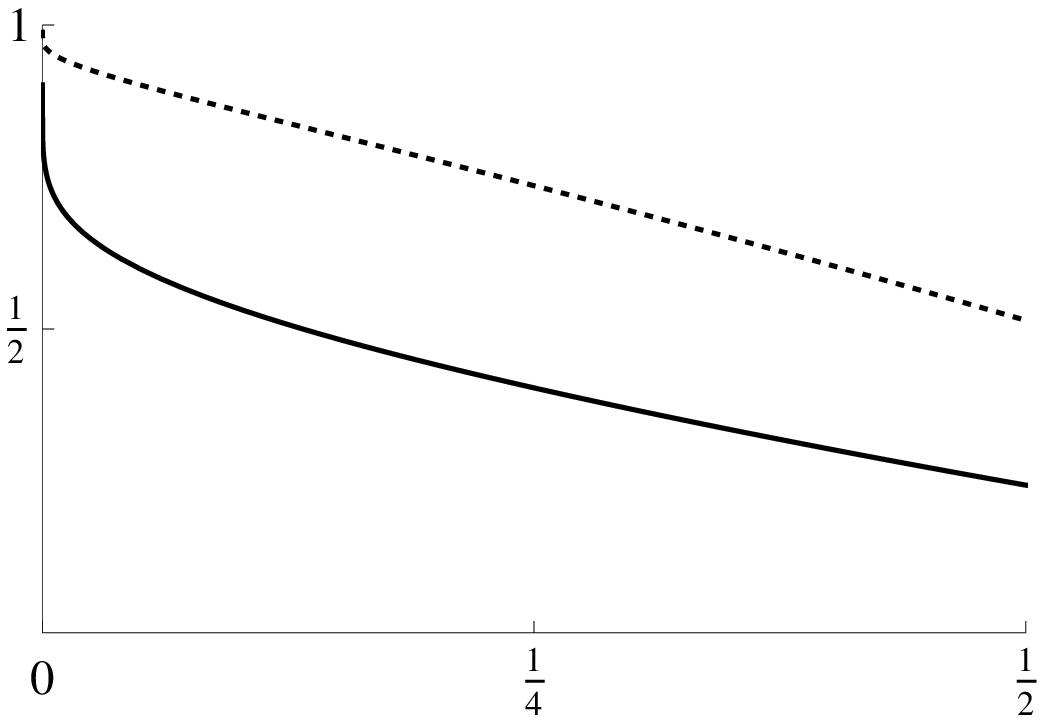}\hspace{1cm}
  \includegraphics[width=.45\textwidth]{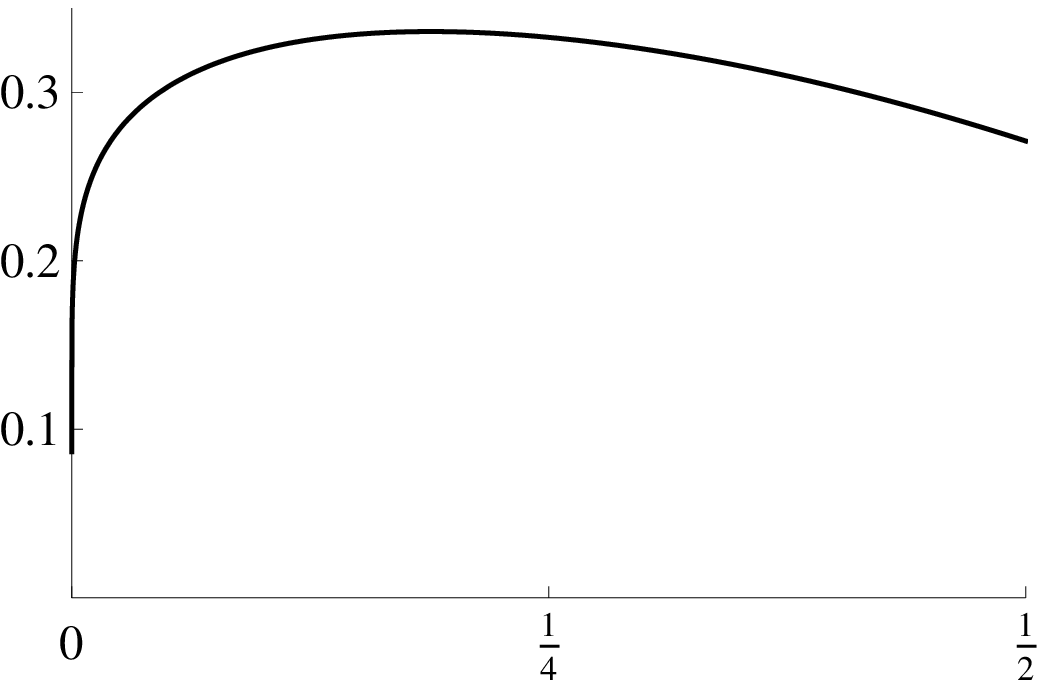}
  \caption{\label{Kbetas} Left: bounds $\beta_1$ (solid line) and $\beta_2$ (dashed line) of Theorem \ref{koch:QHBC} plotted as functions of $\alpha$. Right:$\beta_2 - \beta_1$ plotted as a function of $\alpha$.}
\end{figure}

We have that $\beta_2- \beta_1 < 0.4$, see Figure \ref{Kbetas}.

\begin{proof}
  We calculate first the upper bound $\beta_1$. We concentrate on the worst situation, see Figure~\ref{pic:geodeesi3}, where we first go up and then always to the left to the center of a triangle. Note that other points in the same triangle can be easily connect to the center point and thus they do not effect to the value of $\beta$. Let us denote by $x_0$ the center of $S_a$ and by $x_n$ the center of the triangle constructed on the $n$-th iteration as in Figure \ref{pic:geodeesi3}. We estimate $k_{S_a}(x_0,x_n)$ by using the  curve $\gamma_n = \cup_{i=1}^n [x_{i-1},x_i]$ and denote points $y_n,z_n$ as in Figure \ref{pic:geodeesi3}. We estimate $k_{S_a}(x_n,y_n)$ by the quasihyperbolic length of line segments $[x_n,y_n]$  in the domain $\R^2 \setminus \{ y_{n+1} \}$. By Lemma~\ref{QHjana} we obtain
  \begin{eqnarray*}
    k_{S_a}(x_n,y_n) & \le & \log \left(  2 \left( \frac{\sqrt{3}a^n}{4}+ \sqrt{ \left( \frac{a^n}{4} \right)^2 +  \left( \frac{\sqrt{3}a^n}{4} \right)^2 } \right) \right) - \log\left(\frac{a^n}{2}\right)\\
    && + \log \left(  2 \left( \frac{a^n}{4\sqrt{3}}+ \sqrt{ \left( \frac{a^n}{4} \right)^2 +  \left( \frac{a^n}{4\sqrt{3}} \right)^2 } \right) \right) - \log\left(\frac{a^n}{2}\right)\\
    &=&  \log \big(2 + \sqrt{3}\big)  + \log \sqrt{3} = \log\big(3+ 2\sqrt{3} \big). 
  \end{eqnarray*}
Similarly $k_{S_a}(x_n,y_{n+1})$ is estimated by  the quasihyperbolic length of line segments $[x_n,y_{n+1}]$ in $\R^2 \setminus \{ z_{n} \}$and thus by Lemma~\ref{QHjana} we obtain

  \begin{eqnarray*}
    k_{S_a}(x_n,y_{n+1}) & \le & \log \left(  2 \left( \frac{a^n}{2\sqrt{3}}+ \sqrt{ \left( \frac{a^{n+1}}{2} \right)^2 +  \left( \frac{a^n}{2\sqrt{3}} \right)^2 } \right) \right) - \log a^{n+1}\\
    & = & \log \frac{1+\sqrt{3a^2+1}}{\sqrt{3}a}.
  \end{eqnarray*}
  Therefore we have
  \begin{eqnarray}\label{koch:beta1}
    k_{S_a}(x_0,x_n) & \le k_{S_a}(x_0,x_1) + (n-1) \Big(\log\big(3 +2 \sqrt{3}\big)+\log \frac{1+\sqrt{3a^2+1}}{\sqrt{3}a} \Big).
  \end{eqnarray}
  We easily obtain
  \begin{eqnarray*}
    \dist(x_n, \partial S_a) = \sqrt{\left( \frac{a^{n+1}}{2} \right)^2 + \left( \frac{a^n}{2 \sqrt{3}} \right)^2 } = \frac{a^n}{2}\sqrt{a+1/3}
  \end{eqnarray*}
  and thus
  \begin{eqnarray}\label{koch:dist}
    \log \frac{1}{\dist(x_n, \partial S_a)} = \log \frac{2}{\sqrt{a+1/3}} + n \log \frac{1}{a}.
  \end{eqnarray}
  Combining \eqref{koch:beta1} with \eqref{koch:dist} we obtain that $S_a$ satisfies the $\beta$-QHBC for $\beta \le \beta_1$.

  We prove next the lower bound $\beta_2$. We estimate $k(y_n,y_{n+1})$ by the quasihyperbolic distance between $y_n$ and $y_{n+1}$ in the domain $\R^2 \setminus \{ z_n \}$. We deduce that $|y_{n+1}-z_n| = a^{n+1}/2$,
  \begin{eqnarray*}
      |y_n-z_n| & = & \sqrt{\left( \frac{a^n}{2} \right)^2 + \left( \frac{a^n-a^{n+1}}{2} \right)^2 - \frac{a^n}{2}\frac{a^n-a^{n+1}}{2}\cos \frac{\pi}{3}}\\
    & = & \frac{a^n \sqrt{(1+a)(3+2a)}}{2 \sqrt{2}}
  \end{eqnarray*}
  and by sine rule
  \[
    \sin \measuredangle(y_n,z_n,y_{n+1}) = \frac{\sqrt{3}}{\sqrt{2(1+a)(3+2a)}}.
  \]
  Therefore, by Proposition \ref{MOprop}
  \begin{eqnarray*}
    k(y_n,y_{n+1}) & \ge & \sqrt{\arcsin^2 \frac{\sqrt{3}}{\sqrt{2(1+a)(3+2a)}} + \log^2 \frac{ \sqrt{(1+a)(3+2a)}}{a \sqrt{2}}}.
  \end{eqnarray*}
  and
  \begin{equation}\label{koch:beta2}
    k(x_0,x_n) \ge (n-1) \sqrt{\arcsin^2 \frac{\sqrt{3}}{\sqrt{2(1+a)(3+2a)}} + \log^2 \frac{ \sqrt{(1+a)(3+2a)}}{a \sqrt{2}}}.
  \end{equation}
  Combining \eqref{koch:beta2} with \eqref{koch:dist} we obtain that $S_a$ does not satisfy $\beta$-QHBC for $\beta \ge \beta_2$.
\end{proof}

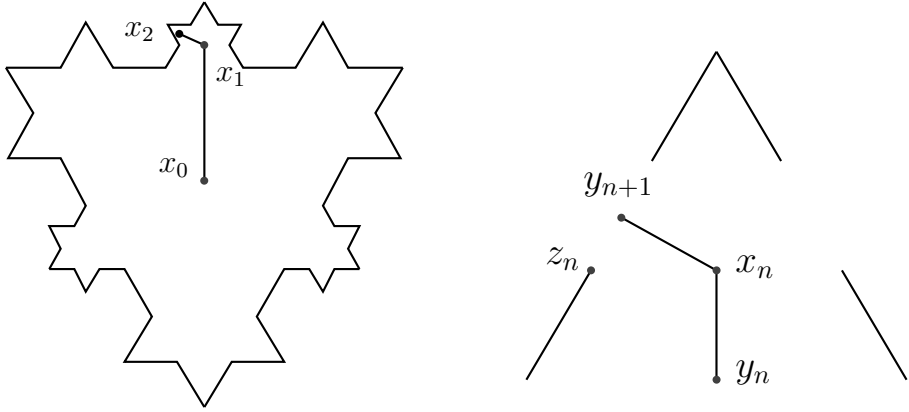
\begin{figure}[ht]
\psset{xunit=3.0cm,yunit=3.0cm,algebraic=true,dotstyle=o,dotsize=3pt 0,linewidth=0.8pt,arrowsize=3pt 2,arrowinset=0.25}
\begin{pspicture*}(-1.04,-1.13)(1.11,0.84)
\psline(-0.87,0.5)(-0.63,0.5)(-0.52,0.7)(-0.4,0.5)(-0.17,0.5)(-0.11,0.6)(-0.16,0.69)(-0.06,0.69)(0,0.79)
\psline(0.87,0.5)(0.63,0.5)(0.52,0.7)(0.4,0.5)(0.17,0.5)(0.11,0.6)(0.16,0.69)(0.06,0.69)(0,0.79)
\psline(0.87,0.5)(0.75,0.3)(0.86,0.1)(0.63,0.1)(0.52,-0.11)(0.57,-0.2)(0.68,-0.2)(0.63,-0.3)(0.68,-0.39)
\psline(0,-1)(0.12,-0.8)(0.35,-0.8)(0.23,-0.6)(0.35,-0.39)(0.46,-0.39)(0.52,-0.49)(0.57,-0.39)(0.68,-0.39)
\psline(0,-1)(-0.12,-0.8)(-0.35,-0.8)(-0.23,-0.6)(-0.35,-0.39)(-0.46,-0.39)(-0.52,-0.49)(-0.57,-0.39)(-0.68,-0.39)
\psline(-0.87,0.5)(-0.75,0.3)(-0.86,0.1)(-0.63,0.1)(-0.52,-0.11)(-0.57,-0.2)(-0.68,-0.2)(-0.63,-0.3)(-0.68,-0.39)
\psline(0,0)(0,0.6)(-0.11,0.65)
\rput[tl](-0.2,0.1){$ x_0 $}
\rput[tl](0.05,0.5){$ x_1 $}
\rput[tl](-0.35,.7){$ x_2 $}
\begin{scriptsize}
\psdots[dotstyle=*,linecolor=darkgray](0,0)
\psdots[dotstyle=*,linecolor=darkgray](0,0.6)
\psdots[dotstyle=*](-0.11,0.65)
\end{scriptsize}
\end{pspicture*}
\psset{xunit=5.0cm,yunit=5.0cm,algebraic=true,dotstyle=o,dotsize=3pt 0,linewidth=0.8pt,arrowsize=3pt 2,arrowinset=0.25}
\begin{pspicture*}(-0.66,-0.15)(0.74,1.01)
\psline(-0.5,0)(-0.33,0.29)
\psline(-0.17,0.58)(0,0.87)
\psline(0.17,0.58)(0,0.87)
\psline(0.5,0)(0.33,0.29)
\psline(0,0)(0,0.29)(-0.25,0.43)
\begin{scriptsize}
\psdots[dotstyle=*,linecolor=darkgray](0,0.29)
\psdots[dotstyle=*,linecolor=darkgray](-0.33,0.29)
\psdots[dotstyle=*,linecolor=darkgray](-0.25,0.43)
\psdots[dotstyle=*,linecolor=darkgray](0,0)
{\large
\rput[tl](0.05,0.33){$ x_n $}
\rput[tl](0.05,0.05){$ y_n $}
\rput[tl](-0.35,0.55){$ y_{n+1} $}
\rput[tl](-0.45,0.35){$ z_n $}
}
\end{scriptsize}
\end{pspicture*}
\caption{Points $x_n$, $y_n$ and $z_n$ as in the proof of Theorem \ref{koch:QHBC}.\label{pic:geodeesi3}}
\end{figure}

\begin{thm}\label{koch:John}
Let $a \in (0,\tfrac12]$. The set $S_a$ is $c$-John with $c= \max\Big\{2, \tfrac{4}{3(1-a)}\Big\}$ and it is not $c'$-John for any $c'<2$. 
\end{thm}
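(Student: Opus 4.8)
The plan is to take as John center the centroid $x_0$ of $S_a$, which by the threefold symmetry is the centroid of the initial triangle $T_0$ of side $1$, and to prove the two assertions separately: the positive part (that $S_a$ is $c$-John with $c=\max\{2,\tfrac{4}{3(1-a)}\}$) by exhibiting explicit carrot curves, and the negative part (that no $c'<2$ works) by a corner obstruction that does not depend on where the center is placed.

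For the upper bound I would first note that for $y\in T_0$ one has $\dist(y,\partial S_a)\ge\dist(y,\partial T_0)$, since the Koch construction only pushes the middle portions of the edges outward and leaves the original edge lines as the innermost boundary. As an equilateral triangle is $2$-John about its centroid, the median-type curves already give the carrot condition with constant $2$ for every $y\in T_0$. For a target $x$ inside a bump I would route the curve through the centroids of the nested chain of bump-triangles containing $x$: join $x$ to the centroid of the smallest triangle containing it by a within-triangle John curve (constant $2$, by the same distance comparison), then join consecutive centroids $x_j\to x_{j-1}$ by straight segments. For the extremal chain built from the largest sub-bumps (side ratio $a$ at each step) these segments lie on the axes of the bumps and pass through their necks, and the ratio $t/\dist(\gamma(t),\partial S_a)$ is largest exactly at a neck. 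There the accumulated length from the innermost point down to the neck of the $j$-th triangle is $\sum_{i>j}\tfrac{(1+a)a^{i-1}}{2\sqrt3}+\tfrac{a^{j}}{2\sqrt3}=\tfrac{a^{j}}{\sqrt3\,(1-a)}$, while the distance from that neck to $\partial S_a$ equals $\tfrac{\sqrt3}{4}a^{j}$, so the quotient is $\tfrac{4}{3(1-a)}$; near a tip the two sides meet at $60^{\circ}$, so the ratio there is $2$. Taking the supremum gives $\max\{2,\tfrac{4}{3(1-a)}\}$.

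For the lower bound, each vertex of $T_0$ and each bump tip is a genuine $60^{\circ}$ convex corner of $S_a$, because those points are never touched by the iteration and the bumps open outward. A $c'$-John curve from an interior point $x$ forces $S_a$ to contain a carrot, and hence a cone of half-angle $\arcsin(1/c')$ with apex at $x$. Pushing $x$ toward such a $60^{\circ}$ corner, this cone cannot have half-angle exceeding $30^{\circ}$: near the corner the domain is the $60^{\circ}$ wedge together with outward bumps that occur at every scale but with exterior gaps between them, so any angular sector reaching beyond the wedge must meet the exterior. Thus $\arcsin(1/c')\le 30^{\circ}$, i.e. $c'\ge 2$, which shows $S_a$ is not $c'$-John for any $c'<2$; since this argument uses no property of $x_0$, it holds for every choice of center.

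I expect the main work to be the upper-bound bookkeeping: one must verify that the neck really is the global maximum of $t/\dist(\gamma(t),\partial S_a)$ along each curve (monotonicity of the ratio on the centre-to-neck and neck-to-centre segments) and that the largest-sub-bump chain is the extremal one, so that the bound $\tfrac{4}{3(1-a)}$ is uniform over all targets $x$. The delicate point in the lower bound is that $\partial S_a$ has no straight piece adjacent to a corner, so a single-ball version of the argument only yields a constant slightly below $2$; it is the cone formulation, together with the fact that the outward bumps cannot fill an angular sector, that recovers the sharp value $2$.
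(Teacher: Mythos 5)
Your positive (upper--bound) half is, in all essentials, the paper's own proof: the same John center (the center of $T_0$), the same polygonal curve through the centers and base midpoints (``necks'') of the nested chain of bump triangles, and the same arithmetic --- consecutive centers at distance $\tfrac{(1+a)a^{i-1}}{2\sqrt3}$, accumulated length $\tfrac{a^{j}}{\sqrt3(1-a)}$ at the neck of the $j$-th triangle against the neck-to-boundary distance $\tfrac{\sqrt3}{4}a^{j}$, giving $\tfrac{4}{3(1-a)}$, together with the constant $2$ from the equilateral triangle $T_0$. Like you, the paper checks the ratio only at the necks and inside the triangles and does not verify that these are the extremal points, so the bookkeeping you flag as ``main work'' is no more than what the paper itself leaves implicit.

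The genuine gap is in your negative part, in the step ``a $c'$-John curve forces $S_a$ to contain a carrot, and hence a cone of half-angle $\arcsin(1/c')$ with apex at $x$.'' This reduction is false: the carrot $\bigcup_t B(\gamma(t),t/c')$ of a non-straight John curve need not contain any straight cone with apex at $x$, and the John condition gives no control on the direction such a cone would have to point (a John curve may turn, and the carrot turns with it). What the condition actually yields is only that every point $p=\gamma(t)$ satisfies $\dist(p,\partial S_a)\ge t/c'\ge (|p-y|-|x-y|)/c'$, so the corner obstruction must be run as a sphere-crossing argument: one needs an upper bound $\sup\{\dist(p,\partial S_a):\ p\in S_a,\ |p-y|=\rho\}\le(\tfrac12+o(1))\,\rho$ at suitable radii $\rho$, and then any curve from $x$ to any center crosses such a sphere at time $t\ge\rho-|x-y|$. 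This is precisely the delicate point you yourself identify, but the ``cone formulation'' cannot repair it because it rests on the invalid carrot-to-cone step. Note also that the naive wedge estimate $\dist(p,\partial S_a)\le\tfrac12|p-y|$ is genuinely false at some points: a bisector point whose perpendicular feet land at the midpoints of the gaps under two symmetric bumps has distance $\tfrac{\sqrt{1+3a^2}}{2}|p-y|$ to $\partial S_a$, so the half-distance bound only holds along well-chosen spheres (for instance those whose feet hit base vertices of bumps), and verifying it there --- including for sphere points lying inside the bumps --- is the real content of the sharpness claim. For comparison, the paper's lower bound fixes the center $x_0$, takes $z\in[x_0,y]$ with $z\to y$, and lets the corner force $c\ge 2|x_0-z|/|x_0-y|\to 2$; it is terse on the same geometric point (and its constants $3|x_0-z|$, $|x_0-y|=2/3$ contain a slip, since $|x_0-y|=1/\sqrt3$, which cancels in the limit), but it does not depend on the carrot-to-cone reduction that your argument needs.
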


Note that the result is sharp in the range $a \in (0, \tfrac13]$.

\begin{proof}
  Let us denote by $T_0$ the open equilateral triangle, which has sidelength 1 and is contained in $S_a$. We choose the John center $x_0$ to be the center of $T_0$ and let $x$ be any point in $S_a$.

  If $x \in T_0$, then we choose $\gamma$ to be the line segment joining $x$ to $x_0$. It is clear that
  \begin{equation}\label{koch:john1}
    \frac{\dist(\gamma(t), \partial S_a)}{\ell(\gamma(t))} \ge \frac{1}{2}, 
  \end{equation}
(and hence every open equilateral triangle is $2$-John).

  If $x \notin T_0$ then $x \in T_n$, where $T_n$ is a maximal equilateral triangle in $S_a \setminus T_0$ with sidelength $a^n$. Let $s$ be the side of $T_n$ with $s \cap S_a = \emptyset$ and $y_n$ the midpoint of $s$ (see Figure \ref{pic:geodeesi3}). We denote $\gamma = [x,y_n] \cup[y_n,x_n]\cup [x_n,y_{n-1}] \cup \cdots \cup [y_1,x_0]$, where $x_n$ is the center of $T_n$ as in Figure \ref{pic:geodeesi3}. We easily obtain that $|y_{n+1}-x_n| = |x_n-y_n| = a^n/(2\sqrt{3})$ and thus $|y_{n+1}-x_n| + |x_n-y_n| = a^n/\sqrt{3}$. This yields for every $k=0,\ldots n$ that
\[
\ell(\gamma_{y_k}) = \frac{1}{\sqrt 3} ( a^n + \ldots + a^k) = \frac{a^k}{\sqrt 3}\frac{1-a^{n-k+1}}{1-a},
\]
where $\gamma_{y_k}$ is the subpath of $\gamma$ that joins $x$ to $y_k$. Since $\dist (y_k, \partial S_a) = \tfrac{\sqrt3}{4} a^k$ we obtain  for every $a$, $n$ and $k$ that 
  \begin{equation}\label{koch:john2}
    \frac{\dist(\gamma(t), \partial S_a)}{\ell(\gamma(t))} = \frac{\tfrac{\sqrt3}{4} a^k}{\frac{a^k}{\sqrt 3}\frac{1-a^{n-k+1}}{1-a}} = \frac34 \, \frac{1-a}{1-a^{n-k+1}} \ge \frac{3}{4} (1-a), 
  \end{equation}
  where $\gamma(t) = y_k$. Note that the last inequality is sharp when $k$ is fixed and $n \to \infty$. When $a \in [0, \tfrac13]$, we have $\tfrac{3}{4} (1-a) \ge \tfrac12$; the inequality is  sharp when $a =\tfrac13$.
By \eqref{koch:john1} and \eqref{koch:john2} the set $S_a$ is $\max\Big\{2, \tfrac{4}{3(1-a)}\Big\}$-John.

Next we show that $S_a$ is not $c$-John for any $c < 2$. Let us denote by $y$ one of the corners of $T_0$ and consider $\gamma =[z,x_0]$ for $z \in [x_0,y]$. We obtain that $S_a$ is not $c$-John for $c < c_z = 3|x_0-z|$. As $z \to y$ we have $|x_0-y| \to 2/3$ and thus $c_z \to 2$ implying the assertion.
\end{proof}


\vspace{0.5cm}

\noindent\small{\textsc{P.\ Harjulehto and R.\ Kl\'en}}\\
\small{Department of Mathematics and Statistics,
FI-20014 University of Turku, Finland}\\
\footnotesize{\texttt{petteri.harjulehto@utu.fi, riku.klen@utu.fi}}\\
\end{document}